\renewcommand{\mod}{\operatorname{mod}}
\newcommand{\ind}{\operatorname{ind}}
\newcommand{\proj}{\operatorname{proj}}
\newcommand{\rad}{\operatorname{rad}}
\newcommand{\Hom}{\operatorname{Hom}}
\newcommand{\End}{\operatorname{End}}
\newcommand{\add}{\operatorname{add}}
\newcommand{\op}{\operatorname{op}}
\newcommand{\cC}{\mathcal{C}}
\newtheorem{thm}{Theorem}[section]
\newtheorem{lem}[thm]{Lemma}
\newtheorem{proposition}[thm]{Proposition}
\newtheorem*{rmk}{Remark}
\begin{document}

\begin{center} \textsc{Two tilts of higher spherical algebras} \end{center}

\begin{center} Adam Skowyrski \end{center} 

\begin{center}Faculty of Mathematics and Computer Science, \\
Nicolaus Copernicus University, Chopina 12/18, 87-100 Toru\'n, Poland \end{center}

\textbf{Abstract.} We introduce and study two exotic families of finite-dimensional algebras over an algebraically 
closed field. We prove that every such an algebra is derived equivalent to a higher spherical algebra studied by 
Erdmann and Skowro\'nski in \cite{ES4}, and hence that it is a tame symmetric periodic algebra of period $4$. This 
together with the results of \cite{ES2,ES4} shows that every trivial extension algebra of a tubular algebra of type 
$(2,2,2,2)$ admits a family of periodic symmetric higher deformations which are tame of non-polynomial growth and 
have the same Gabriel quiver, answering the question recently raised by Skowro\'nski.

MSC: 16D50, 16E30, 16G60, 18E30

Keywords: Periodic algebra, Symmetric algebra, Tame algebra, Higher spherical algebra, Derived equivalence

\section{Introduction}\label{sec:1}

Throughout this article, $K$ will denote a fixed algebraically closed field. By an algebra we mean an associative 
finite-dimensional $K$-algebra with identity, which we assume to be basic and indecomposable. For an algebra $A$ 
we denote by $\mod A$ the category of finite dimensional right $A$-modules, by $\ind A$ its full subcategory 
formed by all indecomposable modules, and by $D$ the standard duality $D=\Hom_K(-,K)$ on $\mod A$. An algebra $A$ 
is called \emph{self-injective} if $A_A$ is injective in $\mod A$, or equivalently, the projective modules in 
$\mod A$ are injective. A prominent class of self-injective algebras is provided by the \emph{symmetric algebras}, 
for which $A$ and $D(A)$ are isomorphic as $A$-$A$-bimodules. Any algebra $A$ is a quotient algebra of its trivial 
extension $T(A)=A\ltimes D(A)$, which is a symmetric algebra.

For an algebra $A$, the module category of $\mod A^e$ of its enveloping algebra $A^e=A^{\op}\otimes_K A$ is naturally 
identified with the category of $A$-$A$-bimodules. By $\Omega_{A^e}$ we denote the \emph{syzygy} operator on 
$\mod A^e$, which assigns to a module $M$ in $\mod A^e$ the kernel $\Omega_{A^e}(M)$ of a minimal projective cover 
of $M$ in $\mod A^e$. An algebra is said to be \emph{periodic} if $\Omega_{A^e}^n(A)\simeq A$ in $\mod A^e$, for 
some $n\geqslant 1$, and if so, the minimal such $n$ is called the \emph{period} of $A$. Periodic algebras are 
examples of self-injective algebras with periodic Hochschild cohomology. 

Finding and possibly classifying symmetric and periodic algebras an important problem because its interesting 
connections with group theory, topology, singularity theory and cluster algebras. For details, we refer to the 
survey article \cite{ES0} and the introductions of \cite{BES,ES1,ES3}. Dugas proved in \cite{D} that every 
representation-finite self-injective algebra, different from $K$, is a periodic algebra. The representation-infinite 
periodic algebras of polynomial growth were classified by Bia{\l}kowski, Erdmann and Skowro\'nski in \cite{BES}. 
We refer interested reader to the survey \cite{S2} for a description of these algebras.

We are concerned with the classification of all periodic symmetric tame algebras of non-polynomial growth. It is 
conjectured in \cite[Problem]{ES1} that every such an algebra is of period $4$. There is a large class of tame 
symmetric periodic algebras of period $4$, called the \emph{weighted surface algebras}, which are associated to 
triangulations of compact real surfaces. Those algebras and their deformations are investigated in \cite{ES1}--\cite{ES4}. It is also conjectured by Erdmann and Skowro\'nski (private communication) that the class of tame 
symmetric periodic algebras of period $4$ consists (up to isomorphism) of weighted surface algebras - as defined in 
\cite{ES1g}, their socle deformations (occuring only in characteristic $2$) and a finite number of exeptional 
families of algebras given by quivers with the number of vertices dividing $6$. Classifying all these exeptional 
algebras is currently and important problem. 

The article is related to the representation theory of tubular algebras \cite{HR,Rin} and the associated 
self-injective algebras of polynomial growth \cite{NS,S1}, which have been recently shown to be periodic \cite{BES}. 
In particular, the trivial extensions $T(B)$ of tubular algebras $B$ provide a wide class of periodic symmetric 
algebras of polynomial growth. Among those there are only four families of algebras of period $4$, namely 
$T(B_1(\mu))$, $T(B_2(\mu))$, $T(B_3(\mu))$ and $T(B_4(\mu))$, given by the tubular algebras $B_1(\mu)$, $B_2(\mu)$, 
$B_3(\mu)$, $B_4(\mu)$, $\mu\in K\setminus\{0,1\}$, of type $(2,2,2,2)$, described in \cite[Example 3.3]{S1}. 
It was lately observed in \cite[Lemma 6.2]{ES1} that the algebras $T(B_4(\mu))$ are the tetrahedral algebras 
(associated to the natural triangulation of the tetrahedron), and in \cite[Lemma 3.7]{ES1g} that the algebras 
$T(B_3(\mu))$ are the spherical algebras (given by a triangulation of sphere in $\mathbb{R}^2$ by four triangles). 
Further, it was proved in \cite{ES2,ES4} that both $T(B_3(\mu))$ and $T(B_4(\mu))$ admit higher deformations, 
called respectively, \emph{higher spherical} and \emph{higher tetrahedral algebras}, which form two exotic 
families of symmetric periodic tame algebras of non-polynomial growth of period $4$. We note that these algebras 
are not isomorphic to weighted surface algebras but their Gabriel quivers are the Gabriel quivers of $T(B_3(\mu))$ 
and $T(B_4(\mu))$, respectively. On the other hand, the Gabriel quivers of $T(B_1(\mu))$ and $T(B_2(\mu))$ are 
the quivers $Q_E$ and $Q_F$ presented below, so also $T(B_1(\mu))$ and $T(B_2(\mu))$ are not weighted surface 
algebras. We also recall that, by general theory of tubular algebras (see \cite[Section 1]{HR} and 
\cite[Section 5]{Rin}), the tubular algebras $B_i(\mu)$, for $i\in\{1,\dots,4\}$, are tilting equivalent and 
hence their trivial extensions $T(B_i(\mu))$, $i\in\{1,\dots,4\}$, are derived equivalent algebras \cite{Rick3}. 

The aim of the paper is to prove that the trivial extension algebras $T(B_1(\mu))$ and $T(B_2(\mu))$ also admit 
higher deformations with the Gabriel quivers $Q_E$ and $Q_F$, respectively, which form new exotic families of 
symmetric tame periodic algebras of period $4$. This answers the question posed by Skowro\'nski during his 
spring lectures in Toru\'n. 

We present now these two families of algebras. Let $m\geqslant 2$ be a natural number and $\lambda\in K^*= K 
\setminus\{0\}$. The algebra $E(m,\lambda)$ is given by the quiver $Q_E$ of the form 

$$\xymatrix{&&1\ar[lldd]_{\alpha_1}\ar[ldd]^{\alpha_3}\ar[rdd]_{\beta_1}&& \\ &&&& \\
 2\ar[rrdd]_{\alpha_2} & 6\ar[rdd]^{\alpha_4} &  & 5\ar[ldd]_{\beta_2} & 4\ar[lluu]_{\alpha_6} \\ &&&& \\
 && 3\ar[uuuu]_{\gamma_0}\ar[rruu]_{\alpha_5} && }$$
 
and the relations:
\begin{enumerate}
\item[(E1)] $\alpha_1\alpha_2-\alpha_3\alpha_4+\beta_1\beta_2=0$,

\item[(E2)] $\alpha_3\alpha_4\alpha_5=\alpha_1\alpha_2\alpha_5+\lambda(\alpha_1\alpha_2\alpha_5\alpha_6)^{m-1}\alpha_1\alpha_2\alpha_5$, 

\item[(E3)] $\gamma_0\beta_1=0$, $\gamma_0\alpha_3=\alpha_5\alpha_6\alpha_3$ and $\gamma_0\alpha_1=\alpha_5\alpha_6\alpha_1+
\lambda(\alpha_5\alpha_6\alpha_1\alpha_2)^{m-1}\alpha_5\alpha_6\alpha_1$,

\item[(E4)] $\alpha_2\gamma_0=\alpha_2\alpha_5\alpha_6+\lambda(\alpha_2\alpha_5\alpha_6\alpha_1)^{m-1}\alpha_2\alpha_5\alpha_6$ and 
$$(\alpha_2\alpha_5\alpha_6\alpha_1)^{m-1}\alpha_2\alpha_5\alpha_6\alpha_3=0,$$

\item[(E5)] $\alpha_4\gamma_0=\alpha_4\alpha_5\alpha_6$ and $\alpha_4\alpha_5\alpha_6\alpha_1(\alpha_2\alpha_5\alpha_6\alpha_1)^{m-1}=0$,

\item[(E6)] $\alpha_6\alpha_3\alpha_4=\alpha_6\alpha_1\alpha_2+\lambda(\alpha_6\alpha_1\alpha_2\alpha_5)^{m-1}\alpha_6\alpha_1\alpha_2$,

\item[(E7)] $\beta_2\gamma_0=0$, $\beta_2\alpha_5\alpha_6\alpha_1=0$, $\beta_2\alpha_5\alpha_6\alpha_3=0$ and 
$\beta_2\alpha_5\alpha_6\beta_1\beta_2=0$, 

\item[(E8)] if a cycle $C$ at vertex $i$ in $Q_E$ is of length $3$ or $4$, then $\theta C^m=C^m\phi=0$, for 
all arrows $\theta,\phi$ with $t(\theta)=s(\phi)=i$.\end{enumerate}

The algebra $F(m,\lambda)$ is given by the quiver $Q_F$

$$\xymatrix{&&1\ar[lldd]_{\alpha_1}\ar[ldd]^{\alpha_3}\ar[rdd]_{\beta_1}\ar[rrdd]^{\beta_3}&& \\ &&&& \\
 2\ar[rrdd]_{\alpha_2} & 6\ar[rdd]^{\alpha_4} &  & 5\ar[ldd]_{\beta_2} & 4\ar[lldd]^{\beta_4} \\ &&&& \\
 && 3\ar@<-0.1cm>[uuuu]_{\gamma_2}\ar@<+0.1cm>[uuuu]^{\gamma_1} && }$$ 
 
and the following relations:
\begin{enumerate}
\item[(F1)] $\alpha_1\alpha_2-\alpha_3\alpha_4+\beta_1\beta_2=0$, $\alpha_1\alpha_2-\alpha_3\alpha_4+\beta_3\beta_4+
\lambda(\alpha_3\alpha_4\gamma_1)^{m-1}\alpha_3\alpha_4=0$,

\item[(F2)] $\alpha_2\gamma_1=\alpha_2\gamma_2$ and $\gamma_2\alpha_1=\gamma_1\alpha_1$,

\item[(F3)] $\alpha_4\gamma_2=\alpha_4\gamma_1+\lambda(\alpha_4\gamma_1\alpha_3)^{m-1}\alpha_4\gamma_1$ and $\gamma_2\alpha_3=\gamma_1\alpha_3+
\lambda(\gamma_1\alpha_3\alpha_4)^{m-1}\gamma_1\alpha_3$,
 
\item[(F4)] $\beta_2\gamma_1=0$, $\beta_4\gamma_2=0$, $\gamma_1\beta_1=0$, and $\gamma_2\beta_3=0$,

\item[(F5)] $\beta_2\gamma_2\beta_1\beta_2=0$ and $\beta_4\gamma_1\beta_3\beta_4=0$,

\item[(F6)] $\gamma_1\alpha_1\alpha_2=\gamma_1\alpha_3\alpha_4$ and $\alpha_1\alpha_2\gamma_1=\alpha_3\alpha_4\gamma_1$,

\item[(F7)] $(\alpha_3\alpha_4\gamma_1)^m\alpha_1=0$, $(\alpha_3\alpha_4\gamma_1)^m\alpha_3=0$, 
$\gamma_1(\alpha_3\alpha_4\gamma_1)^m=0$, $\alpha_2(\gamma_1\alpha_3\alpha_4)^m=0$ and 
$\alpha_4(\gamma_1\alpha_3\alpha_4)^m=0$. \end{enumerate}

Following \cite{ES4}, we denote by $S(m,\lambda)$ the higher spherical algebra (see Section \ref{sec:2} for details). 
The following theorem is the main result of the article.

\begin{thm}\label{mainthm} For any $m\geqslant 2$ and $\lambda\in K^*$, the algebras $E(m,\lambda)$, $F(m,\lambda)$ 
and $S(m,\lambda)$ are derived equivalent.\end{thm}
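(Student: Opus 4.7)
The plan is to prove Theorem \ref{mainthm} via two applications of Rickard's Morita theorem for derived categories: construct tilting complexes $T_E, T_F \in K^b(\proj S(m,\lambda))$ with $\End_{K^b}(T_E) \cong E(m,\lambda)$ and $\End_{K^b}(T_F) \cong F(m,\lambda)$. Since derived equivalence is an equivalence relation, establishing both equivalences $E(m,\lambda) \sim S(m,\lambda)$ and $F(m,\lambda) \sim S(m,\lambda)$ yields the three-way statement.

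The first step is to recall from Section \ref{sec:2} the quiver $Q_S$ and relations of $S(m,\lambda)$, together with the indecomposable projectives $P_1^S, \dots, P_6^S$. Comparing the quivers $Q_S$, $Q_E$, $Q_F$ vertex by vertex, the plan is to identify which indecomposable projectives should be kept as stalk summands and which should be replaced by two-term complexes of Okuyama--Rickard type, i.e.\ complexes of the form $P_i \xrightarrow{f} P_j$ (with $P_j$ in degree $0$) obtained from the minimal projective presentation of an appropriate simple or radical layer. For $T_E$ the comparison of $Q_E$ with $Q_S$ (same number of vertices, different distribution of arrows near the arrow $\gamma_0$) suggests keeping five of the stalk projectives and mutating at a single vertex, while for $T_F$ the appearance of the pair of parallel arrows $\gamma_1,\gamma_2$ and the extra arrows $\beta_3, \beta_4$ indicates a mutation involving at least two vertices (likely vertices $4$ and one of $3$ or $1$). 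A useful guide for choosing $f$ is to match the dimensions of the $P_i^S$ with those the candidate endomorphism algebra must exhibit via its Cartan matrix.

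Once $T_E$ and $T_F$ are written down, one verifies the tilting axioms: (i) $\Hom_{K^b}(T,T[n])=0$ for $n\neq 0$ and (ii) $\add T$ generates $K^b(\proj S(m,\lambda))$ as a triangulated category. Axiom (i) reduces, via the two-term shape and the fact that $S(m,\lambda)$ is symmetric hence self-injective, to checking a finite list of compositions of the structure maps; axiom (ii) follows from standard mutation arguments showing that each $P_i^S$ lies in the thick subcategory generated by $T$. The endomorphism algebra is then computed by listing a basis of $\Hom_{K^b}(T_i, T_j)$ for each pair of summands, reading off the arrows of the Gabriel quiver of $\End(T)$, and identifying the relations.

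The main obstacle, as usual for deformed algebras of this kind, is the last step: showing that the computed endomorphism algebra satisfies \emph{precisely} the deformed relations (E1)--(E8) respectively (F1)--(F7), and no others. The subtle point is the appearance of the higher-order terms $\lambda(\cdots)^{m-1}(\cdots)$ in relations such as (E2), (E3), (F1) and (F3). These must be tracked through the computation of composition maps between the two-term complex summands, where they arise from the known deformation terms in $S(m,\lambda)$ combined with the differentials of the tilting complex. Verifying the socle relations (E7), (E8), (F4)--(F7) requires in addition a careful dimension count and comparison of Cartan matrices, ultimately invoking that both candidate algebras are finite-dimensional of the predicted dimension. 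Since it is already known from \cite{ES2,ES4} that $S(m,\lambda)$ is a symmetric periodic algebra of period $4$, once the derived equivalences are established the remaining properties (tame, symmetric, periodic of period $4$) transfer automatically by Rickard's theorem and the derived invariance of these properties.
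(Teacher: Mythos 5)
Your plan coincides with the paper's proof: the paper constructs exactly such Okuyama--Rickard-type tilting complexes, namely $T^{(1)}=\textbf{P}_1\oplus\textbf{P}_2\oplus\textbf{P}_3\oplus\textbf{P}_4\oplus\bar{\textbf{P}}_5\oplus\textbf{P}_6$ and $T^{(2)}$ with $\bar{\textbf{P}}_4$ and $\bar{\textbf{P}}_5$ the two-term complexes $P_4\xrightarrow{\gamma}P_3$ and $P_5\xrightarrow{\nu}P_3$, verifies (T1)--(T2), and identifies the endomorphism algebras with $E(m,\lambda)$ and $F(m,\lambda)$ by computing Cartan matrices, Gabriel quivers, the deformed relations, and dimensions, exactly as you outline. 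The one detail your sketch misjudges is the location of the second mutation for $T_F$: it is at vertices $4$ and $5$ (both complexes ending in $P_3$), not at vertex $3$ or $1$.
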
 

Applying general facts on derived equivalences of algebras (see theorems presented in Section \ref{sec:3}), we obtain 
the following theorem.

\begin{thm}\label{cor:1} Let $E=E(m,\lambda)$ and $F=F(m,\lambda)$, for some $m\geqslant 2$ and $\lambda\in K^*$. 
Then the following statements hold:
\begin{enumerate}
\item[(1)] $E$ and $F$ are finite dimensional algebras with 
$$\dim_K E(m,\lambda)=25m+13\mbox{ and }\dim_K F(m,\lambda)=16m+16.$$
\item[(2)] $E$ and $F$ are symetric algebras.
\item[(3)] $E$ and $F$ are periodic algebras of period $4$.
\item[(4)] $E$ and $F$ are tame algebras of non-polynomial growth,
\end{enumerate}\end{thm}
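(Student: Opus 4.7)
The plan is to obtain items (2), (3) and (4) as direct consequences of Theorem~\ref{mainthm} combined with the known properties of the higher spherical algebra $S(m,\lambda)$, while treating item (1) by an explicit basis computation inside the bound quiver algebras.

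For items (2)--(4), I would invoke the derived-invariance results collected in Section~\ref{sec:3}: Rickard's theorem that the symmetric property is preserved under derived equivalence; the fact that an algebra derived equivalent to a periodic algebra is again periodic of the same period; and the fact that, for symmetric algebras, both tameness and the growth type are derived invariants. By \cite{ES4} the algebra $S(m,\lambda)$ is a symmetric tame periodic algebra of period~$4$ and non-polynomial growth, so Theorem~\ref{mainthm} transfers all four properties simultaneously to $E(m,\lambda)$ and $F(m,\lambda)$. In particular (2), (3) and (4) follow at once, and $E$, $F$ are also a priori finite-dimensional.

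It remains to compute $\dim_K E$ and $\dim_K F$. The standard recipe for a bound quiver algebra applies: for each primitive idempotent $e_i$, produce a finite list of paths starting at $i$ whose residues modulo the ideal of relations span $e_iE$ (respectively $e_iF$), and verify that this list is linearly independent. The deformation relations (E2)--(E6) and (F1)--(F3) allow any sufficiently long word starting at a given vertex to be rewritten as a power of the relevant cycle at that vertex, while the monomial relations (E7), (E8), (F4), (F5), (F7) truncate these cycles at exponent~$m$. Vertex by vertex this yields candidate spanning sets of cardinalities $25m+13$ and $16m+16$.

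The main obstacle is linear independence, i.e.\ showing that the relations do not collapse the candidate bases any further. Here I would exploit that by item~(2), already established, $E$ and $F$ are symmetric, so each indecomposable projective $e_iE$ has a one-dimensional socle, which must be generated by the maximal cycle at $i$. Combined with the finite-dimensionality inherited from $S(m,\lambda)$, this pins down the Cartan matrix and forces the candidate spanning sets to be genuine bases, producing the asserted formulas $\dim_K E(m,\lambda)=25m+13$ and $\dim_K F(m,\lambda)=16m+16$.
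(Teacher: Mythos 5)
Your handling of items (2)--(4) coincides with the paper's: Theorem~\ref{mainthm} together with Theorems~\ref{thm:3.3}, \ref{thm:3.4}, \ref{thm:3.5} and the properties of $S(m,\lambda)$ from Theorem~\ref{thm:2.1} transfers symmetry, periodicity of period $4$, tameness and non-polynomial growth to $E(m,\lambda)$ and $F(m,\lambda)$. The gap is in item (1), at the linear independence step. Your spanning sets (which do match the paper's $\mathcal{B}_1,\dots,\mathcal{B}_6$) only give the upper bounds $\dim_K E(m,\lambda)\leqslant 25m+13$ and $\dim_K F(m,\lambda)\leqslant 16m+16$; the entire difficulty is the lower bound, and the argument you propose for it does not close. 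First, the claim that the socle of $e_iE(m,\lambda)$ is generated by the maximal cycle $C^m$ presupposes $C^m\neq 0$ in the bound quiver algebra, which is exactly the kind of non-collapse statement you are trying to establish: the relations could a priori annihilate $C^m$ and the quotient would still be a symmetric algebra, just a smaller one. Second, even granting simple socles and $\dim_K e_iAe_j=\dim_K e_jAe_i$, this does not ``pin down the Cartan matrix'': a derived equivalence determines the Cartan matrix only up to congruence by the base-change matrix of the tilting complex (note $\dim_K S(m,\lambda)=36m+4$ differs from both target dimensions), and abstract symmetry is compatible with many Cartan matrices. Neither ingredient produces the needed lower bound.

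The paper obtains the lower bound from the other side of the derived equivalence: it computes the Cartan matrices of $E:=\End_{\mathcal{K}_S}(T^{(1)})$ and $F:=\End_{\mathcal{K}_S}(T^{(2)})$ entry by entry via Happel's alternating-sum formula (Theorem~\ref{thm:3.2}), which gives $\dim_K E=25m+13$ and $\dim_K F=16m+16$ outright; it then checks that the Gabriel quivers agree and that the defining relations of $E(m,\lambda)$ and $F(m,\lambda)$ hold in $E$ and $F$, yielding surjections $E(m,\lambda)\twoheadrightarrow E$ and $F(m,\lambda)\twoheadrightarrow F$, and the spanning sets of matching cardinality force these surjections to be isomorphisms, proving (1) and Theorem~\ref{mainthm} simultaneously. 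If you insist on working only inside the bound quiver algebras, the standard substitute is to write down an explicit symmetrizing form on $KQ$ (supported on the maximal cycles), verify that it vanishes on the relation ideal, and check that the induced pairing between the candidate bases of $e_iAe_j$ and $e_jAe_i$ is nondegenerate; but that is a substantive computation which your appeal to the abstract symmetry from item (2) does not replace.
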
 

We also mention that families $E(m,\lambda)$ and $F(m,\lambda)$ reveal new exotic pieces of classification problem 
for algebras of generalized quaterion type \cite{ES3} as well as of solution for the problem raised in \cite{ES4}. 
For the relevant background on the representation theory of associative algebras we refer to \cite{ASS, Ha, SS, SY}. 

\section{Higher spherical algebras}\label{sec:2} This section is devoted to outline basic facts about higher 
spherical algebras \cite{ES4} needed in further considerations.

For a natural number $m\geqslant 2$ and $\lambda\in K^*$ we denote by $S(m,\lambda)$ the algebra given by the 
quiver of the following form 
$$\xymatrix{&&1\ar[ldd]^{\alpha}\ar[rrdd]^{\rho}&& \\ &&&& \\
 5\ar[rruu]^{\delta} & 2\ar[rdd]^{\beta} &  & 4\ar[luu]^{\sigma} & 6\ar[lldd]^{\omega} \\ &&&& \\
 && 3\ar[lluu]^{\nu}\ar[ruu]^{\gamma} && }$$ 
and the relations: 
\begin{multicols}{2}
\begin{enumerate}
\item[(1)] $\beta\nu\delta=\beta\gamma\sigma+\lambda(\beta\gamma\sigma\alpha)^{m-1}\beta\gamma\sigma$,
\item[(2)] $\nu\delta\alpha=\gamma\sigma\alpha+\lambda(\gamma\sigma\alpha\beta)^{m-1}\gamma\sigma\alpha$,
\item[(3)] $\sigma\rho\omega=\sigma\alpha\beta+\lambda(\sigma\alpha\beta\gamma)^{m-1}\sigma\alpha\beta$,
\item[(4)] $\rho\omega\gamma=\alpha\beta\gamma+\lambda(\alpha\beta\gamma\sigma)^{m-1}\alpha\beta\gamma$,
\item[(5)] $\alpha\beta\nu=\rho\omega\nu$,
\item[(6)] $\delta\alpha\beta=\delta\rho\omega$,
\item[(7)] $\omega\gamma\sigma=\omega\nu\delta$,
\item[(8)] $\gamma\sigma\rho=\nu\delta\rho$,
\item[(9)] $(\alpha\beta\gamma\sigma)^m\alpha=0$,
\item[(10)] $(\gamma\sigma\alpha\beta)^m\gamma=0$.
\end{enumerate}
\end{multicols}

For further calculations we will use some additional relations in $S(m,\lambda)$. It follows from 
\cite[Lemma 4.1]{ES4} that the following relations hold in $S(m,\lambda)$: 

\begin{enumerate}
\item[(1')] $(\beta\gamma\sigma\alpha)^{m-1}\beta\gamma\sigma\rho=0$ and $(\alpha\beta\gamma\sigma)^m\rho=0$,
\item[(2')] $\omega\gamma\sigma\alpha(\beta\gamma\sigma\alpha)^{m-1}=0$ and $\omega(\gamma\sigma\alpha\beta)^m=0$,
\item[(3')] $(\sigma\alpha\beta\gamma)^{m-1}\sigma\alpha\beta\nu=0$ and $(\gamma\sigma\alpha\beta)^m\nu=0$,
\item[(4')] $\delta\alpha\beta\gamma(\sigma\alpha\beta\gamma)^{m-1}=0$ and $\delta(\alpha\beta\gamma\sigma)^m=0$,
\item[(5')] $(\rho\omega\nu\delta)^r=(\alpha\beta\gamma\sigma)^r$ and $(\nu\delta\rho\omega)^r=(\gamma\sigma\alpha\beta)^r$, for any $r\in\{2,\dots,m\}$. \end{enumerate}

In the rest part of the paper we will frequently refer to above relations and relations for $E(m,\lambda)$ and 
$F(m,\lambda)$ presented in the introduction.

Let $S=S(m,\lambda)$ be a fixed higher spherical algebra, with $m\geqslant 2$ and $\lambda\in K^*$. Denote by 
$e_1,\dots,e_6$ the complete set of all pairwise orthogonal primitive idempotents in $S$. The modules $P_1=e_1S,
\dots,P_6=e_6S$ form a complete set of pairwise nonisomorphic indecomposable projective $S$-modules. For $i\in 
\{1,2,3,4\}$, let $X_i$ be the unique cycle of length $4$ in $Q_S$ around $i$ formed by the arrows $\alpha,
\beta,\gamma,\sigma$. Similarily, if $i\in\{5,6\}$, we denote by $X_i$ the unique cycle of length $4$ in $Q_S$ 
around $i$ formed by the arrows $\rho,\omega,\nu,\delta$.

\begin{rmk} For any cycle $X'_i$ of length $4$ in $Q_S$ around $i$, the difference $X_i-X'_i$ in $S$ is zero or it 
belongs to $K X_i^m$ and $X_i^k=(X'_i)^k$, for $k\in\{2,\dots,m\}$. Moreover, from relations (1)-(10) and (1')-(5') above, we obtain that $\theta X_i^m=X_i^m\psi=0$, for all arrows $\theta,\psi$ in $Q_S$ with $t(\theta)=s(\psi)=i$. 
Note also that, if $i\in\{1,\dots,6\}$, then any two paths $p_1,p_2$ from $i$ to $j$ in $Q_S$ give $X_i^kp_1= 
X_i^kp_2$ and $p_1X_j^k=p_2X_j^k$ in $S$, if $k\in \{1,\dots,m\}$. Further observations are based directly on 
\cite[see proof of Proposition 4.2]{ES4}. Consider the following paths in $Q_S$ 

$$ X_{12}=\alpha, X_{13}=\alpha\beta, X_{14}=\alpha\beta\gamma, X_{15}=\alpha\beta\nu, X_{16}=\rho, \tilde{X}_{13}=
\rho\omega$$
$$ X_{21}=\beta\gamma\sigma, X_{23}=\beta, X_{24}=\beta\gamma, X_{25}=\beta\nu, X_{26}=\beta\gamma\sigma\rho,$$
$$ X_{31}=\gamma\sigma, X_{32}=\gamma\sigma\alpha, X_{34}=\gamma, X_{35}=\nu, X_{36}=\gamma\sigma\rho, 
\tilde{X}_{31}=\nu\delta$$ 
$$ X_{41}=\sigma, X_{42}=\sigma\alpha, X_{43}=\sigma\alpha\beta, X_{45}=\sigma\alpha\beta\nu, X_{46}=\sigma\rho,$$
$$ X_{51}=\delta, X_{52}=\delta\alpha, X_{53}=\delta\rho\omega, X_{54}=\delta\rho\omega\gamma, X_{56}=\delta\rho, $$ 
$$ X_{61}=\omega\nu\delta, X_{62}=\omega\nu\delta\alpha, X_{63}=\omega, X_{64}=\omega\gamma, X_{65}=\omega\nu$$
and write $X_{ij}^r$ for path of the form $X_i^rX_{ij}$. For any $i,j\in\{1,\dots,6\}$, we pick a $K$-basis 
$\mathcal{S}_{ij}$ of $e_iSe_j$ as described below. For $i\in\{1,\dots,6\}$, we take $\mathcal{S}_{ii}=\{X_i^r;\mbox{ }0\leqslant r \leqslant m\}$, whereas
$$\mathcal{S}_{13}=\{X_{13}^r;\mbox{ }0\leqslant r\leqslant m-1\}\cup\{\tilde{X}_{13}\}\mbox{ and }\mathcal{S}_{31}=\{X_{31}^r;\mbox{ }0\leqslant r\leqslant m-1\}\cup\{\tilde{X}_{31}\}.$$ 
Moreover, if $(i,j)$ is one of pairs $(2,6)$, $(6,2)$, $(4,5)$ or $(5,4)$, then we put $\mathcal{S}_{ij}=\{X_{ij}^r;
\mbox{ }0\leqslant r\leqslant m-2\}$. For the remaining pairs of indices $\mathcal{S}_{ij}=\{X_{ij}^r;\mbox{ }
0\leqslant r\leqslant m-1\}$.\end{rmk}

In particular, it follows that the Cartan matrix of $S=S(m,\lambda)$ is of the form 
$$C_S=[c^S_{ij}]=\left[\begin{array}{cccccc}
m+1 & m & m+1 & m & m & m \\ m & m+1 & m & m & m & m-1 \\ m+1 & m & m+1 & m & m & m \\ 
m & m & m & m+1 & m-1 & m \\ m & m & m & m-1 & m+1 & m \\ m & m-1 & m & m & m & m+1 \end{array}\right].$$ 

Finally, we recall one of the main results from \cite[see Theorem 1]{ES4}.

\begin{thm}\label{thm:2.1} Let $S=S(m,\lambda)$ be a higher spherical algebra. Then the following statements hold
\begin{enumerate}[(1)]
\item $S$ is a finite dimensional algebra with $\dim_KS=36m+4$.
\item $S$ is a tame symmetric algebra of non-polynomial growth.
\item $S$ is a periodic algebra of period $4$.
\end{enumerate}\end{thm}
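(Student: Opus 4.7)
The plan is to establish the three assertions in order, leaning heavily on the explicit basis described in the Remark.

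For part (1), I would first verify that the family $\mathcal{S}=\bigcup_{i,j}\mathcal{S}_{ij}$ really is a $K$-basis of $S$. Spanning is handled by a rewriting argument: any path in $KQ_S$ can be reduced, using relations (1)--(10) together with the derived relations (1')--(5'), to a linear combination of the distinguished paths $X_{ij}^r$ (and the additional $\tilde X_{13},\tilde X_{31}$) by pushing cycles of length $4$ to the left and eliminating any path containing $X_i^m$ followed (or preceded) by an arrow. Linear independence is the more delicate point; I would reduce it to the special case $\lambda=0$, where $S(m,0)$ admits a direct Morita/identification with an iterated trivial extension of a tubular algebra, and then argue that the deformation does not change the $K$-dimension because every relation for $\lambda\ne 0$ differs from its $\lambda=0$ counterpart only by a term in higher socle degree. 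Summing the entries of the Cartan matrix displayed in the excerpt yields $\dim_K S=36m+4$.

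For part (2), the symmetric structure is witnessed by an explicit linear form. Using the given basis, I would define $\varphi\colon S\to K$ by $\varphi(X_i^m)=1$ for $i=1,\dots,6$ and $\varphi=0$ on every other basis vector, then check that $(x,y)\mapsto \varphi(xy)$ is a symmetric, associative, non-degenerate bilinear form. Non-degeneracy follows because for each basis path $X_{ij}^r$ there is a unique (up to lower-order corrections) basis path $X_{ji}^{m-r-1}$ whose composition lies in $X_i^m + \rad^{>}\;$, and the relations guarantee that the products $X_{ij}^rX_{ji}^s$ behave uniformly. Tameness and non-polynomial growth of $S(m,\lambda)$ I would derive from a comparison with the polynomial-growth algebra $T(B_3(\mu))$: identify $S(m,\lambda)$ as a geometric deformation of $T(B_3(\mu))$ (as already indicated in the introduction) and invoke the semicontinuity of tameness under deformation in a flat family of algebras, while growth jumps from polynomial to non-polynomial precisely because the relations introduce new cycles of length $4m$ and the indecomposable modules form families parametrised by varieties of unbounded dimension.

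For part (3), periodicity of period $4$ is the hardest step and is the main obstacle of the whole theorem. I would construct an explicit minimal projective resolution of the bimodule $_SS_S$ over $S^e=S^{\op}\otimes_K S$. Over the enveloping algebra one has $P_0=\bigoplus_{i=1}^6 Se_i\otimes e_iS$ covering the multiplication map, and each subsequent term $P_k$ is indexed by arrows at stage $1$, by the relations (1)--(10) at stage $2$, by the syzygies among the relations at stage $3$, and by a single copy of each $Se_i\otimes e_iS$ at stage $4$. The main calculation is to write down the differentials $d_1,d_2,d_3,d_4$ explicitly (the entries are elements of $S^{\op}\otimes S$ read off directly from the quiver and the relations) and then to verify two things: that $d_k d_{k+1}=0$, and that the $4$-th syzygy is isomorphic to $S$ via an explicit bimodule map. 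Once this is accomplished, minimality (because all entries of the $d_k$ lie in the radical) ensures that the period is exactly $4$. The bulk of the work lies in guessing the correct differential $d_3$, since the relations for $S(m,\lambda)$ are not homogeneous in path length; the deformation term $\lambda(\cdots)^{m-1}(\cdots)$ must appear in $d_3$ as well, in such a way that the composition $d_2d_3$ cancels these higher-order contributions. Having derived periodicity, self-injectivity (and in particular the symmetric property above) is an automatic consequence, giving an independent check of (2).
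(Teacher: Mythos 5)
You should first note that the paper does not prove this statement at all: Theorem \ref{thm:2.1} is explicitly recalled from \cite[Theorem 1]{ES4}, so the ``paper's proof'' is a citation, and your proposal is really an outline of how one would reprove the result of Erdmann and Skowro\'nski. Your overall architecture (explicit monomial basis, symmetrizing linear form supported on the socle elements $X_i^m$, explicit four-term projective bimodule resolution with $\Omega^4_{S^e}(S)\cong S$) does match the strategy of \cite{ES4}. However, as written the proposal has genuine gaps. The most serious is the tameness argument: there is no ``semicontinuity of tameness under deformation'' in the direction you invoke. The available tool (Geiss/Crawley-Boevey) says that if $A$ \emph{degenerates to} a tame algebra then $A$ is tame; so you would have to exhibit an explicit degeneration of $S(m,\lambda)$ to a known tame algebra of the \emph{same} dimension $36m+4$ (in \cite{ES4} this is done by degenerating to a suitable special biserial algebra), and $T(B_3(\mu))$ cannot play this role since its dimension corresponds to the case $m=1$. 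Likewise, ``indecomposable modules form families parametrised by varieties of unbounded dimension'' is a restatement of non-polynomial growth, not a proof of it; one must actually produce, for infinitely many dimensions $d$, more than a bounded number of one-parameter families (typically via a Galois covering or a functorial embedding from a known non-polynomial-growth algebra).

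Two further points. Your closing claim that periodicity makes the symmetric property ``an automatic consequence'' is false: periodic algebras are self-injective, but self-injective does not imply symmetric (already weakly symmetric non-symmetric Nakayama algebras are periodic), so part (2) cannot be recovered from part (3) and the explicit form $\varphi$ must carry the whole weight --- and there you still need to verify associativity $\varphi(xy)=\varphi(yx)$ against the non-homogeneous relations, which is where the deformation terms $\lambda(\cdots)^{m-1}(\cdots)$ bite. Finally, for the dimension count, reducing linear independence to ``$\lambda=0$'' is unsubstantiated: the whole danger of such presentations is that dimensions drop for special parameter values, so asserting that the higher-socle-degree correction terms do not change $\dim_K S$ is exactly the statement to be proved (via the Diamond Lemma, or by combining the upper bound from spanning with the lower bound extracted from the exact bimodule complex in part (3)). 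The plan for part (3) is the right one but is only a to-do list; producing $d_3$ compatible with the inhomogeneous relations is the actual content and is not carried out.
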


\section{Derived equivalences of algebras}\label{sec:3} In this section, we recall some basic facts on derived 
equivalences of algebras.

For an algebra $A$ we denote by $\mathcal{C}_A$ the category of bounded complexes $X=(X^n,d^n)_{n\in\mathbb{Z}}$ ($X^n$ are modules in $\mod A$ and $d^n=d^n_X:X^n\to X^{n-1}$ homomorphisms of $A$-modules) with usually understood 
morphisms between them. Two morphisms $f,g:X\to Y$ in $\mathcal{C}_A$ are said to be \emph{homotopic}, which is 
denoted by $f\sim g$, provided that there is a collection $(s_n)_{n\in\mathbb{Z}}$ of homomorphisms 
$s_n:X^n\to Y^{n+1}$ in $\mod A$ with $f^n-g^n=s_{n-1}d^n_X+d^{n+1}_Ys_n$, for any $n\in\mathbb{Z}$. By $\mathcal{K}^b(\mod A)$ we denote the homotopy category of bounded complexes (over $A$), whose objects are the same as in 
$\mathcal{C}_A$, whereas morphisms are homotopy classes of morphisms in $\mathcal{C}_A$. Further, we recall that the 
derived category $\mathcal{D}^b(\mod A)$ of $A$ is the localization of $\mathcal{K}^b(\mod A)$ with respect to 
quasi-isomorphisms. It is well-known that both $\mathcal{K}^b(\mod A)$ and $\mathcal{D}^b(\mod A)$ admit structure 
of a triangulated $K$-category with suspension functor being the left shift functor $(-)[1]$. Two algebras $A$ and 
$B$ are called \emph{derived equivalent} if and only if the derived categories $\mathcal{D}^b(\mod A)$ and 
$\mathcal{D}^b(\mod B)$ are equivalent as triangulated categories. 

Note also that the full subcategory $\mathcal{K}_A:=\mathcal{K}^b(\proj A)$ of $\mathcal{K}^b(\mod A)$ formed by all 
(bounded) complexes of projective modules in $\mod A$ has a structure of triangulated category induced from the 
shift in homotopy category. Moreover, if $T$ is a complex in $\mathcal{K}_A$, then $T$ is said to be 
\emph{a tilting complex}, if the following conditions are satisfied:

\begin{enumerate}
\item[(T1)] $\Hom_{\mathcal{K}_A}(T,T[i])=0$, for all integers $i\neq 0$,
\item[(T2)] $\add(T)$ generates $\mathcal{K}_A$ as triangulated category.
\end{enumerate}

Further we have the following handy criterion for verifying derived equivalence of algebras, due to J. Rickard \cite[Theorem 6.4]{Rick1}. 

\begin{thm}\label{thm:3.1} Two algebras $A$ and $B$ are derived euivalent if and only if there exists a tilting 
complex $T$ in $\mathcal{K}_A$ such that $\End_{\mathcal{K}_A}(T)\cong B$. \end{thm}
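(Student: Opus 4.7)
This is Rickard's classical criterion \cite{Rick1}, and the plan is to prove both directions of the biconditional separately.

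For necessity, assume a triangle equivalence $F\colon \mathcal{D}^b(\mod B) \to \mathcal{D}^b(\mod A)$ exists, and set $T := F(B_B)$, viewing $B_B$ as a complex concentrated in degree zero. Then $\Hom_{\mathcal{D}^b(\mod B)}(B_B, B_B[i]) = \Ext^i_B(B,B) = 0$ for $i \neq 0$, which transports through $F$ to give (T1), and similarly $\End_{\mathcal{K}_A}(T) \cong \End_{\mathcal{D}^b(\mod B)}(B_B) \cong B$ by full faithfulness. To see $T \in \mathcal{K}_A$, I would use the intrinsic characterisation of perfect complexes as the compact objects in the unbounded derived category, a property preserved by any triangle equivalence; condition (T2) then follows because $\add(B_B)$ manifestly generates $\mathcal{K}^b(\proj B)$ as a triangulated category, and this property transfers along $F$.

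For sufficiency, start with a tilting complex $T \in \mathcal{K}_A$ satisfying $\End_{\mathcal{K}_A}(T) \cong B$, and fix a decomposition $T = T_1 \oplus \cdots \oplus T_n$ matched under this isomorphism with a decomposition $B_B = P_1^B \oplus \cdots \oplus P_n^B$ into indecomposable projectives. The plan is to construct a triangle functor $G\colon \mathcal{K}^b(\proj B) \to \mathcal{K}_A$ sending $P_i^B$ to $T_i$, and argue that it is an equivalence which then upgrades to the bounded derived categories. Define $G$ first on $\add(B_B)$ by matching summands and transporting morphisms via the isomorphism $\End_{\mathcal{K}_A}(T) \cong B$, and then extend term-by-term to complexes of projectives, noting that $d \circ d = 0$ is preserved because $G$ is additive on each term. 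Full faithfulness on $\add(B_B)$ is by construction, full faithfulness in general propagates by a five-lemma devissage along distinguished triangles using the vanishing (T1), and essential surjectivity is precisely (T2). To upgrade $\mathcal{K}^b(\proj B) \simeq \mathcal{K}_A$ to $\mathcal{D}^b(\mod B) \simeq \mathcal{D}^b(\mod A)$, one invokes the natural fully faithful embedding $\mathcal{K}^b(\proj A) \hookrightarrow \mathcal{D}^b(\mod A)$ together with the existence of projective resolutions for any bounded complex of finitely generated modules.

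The principal obstacle lies in making the construction of $G$ coherent at the level of morphisms: one must lift elements of $B$, which are only homotopy classes of chain maps between the $T_i$, to genuine chain maps whose composition is strictly, and not merely up to homotopy, compatible with multiplication in $B$. Rickard's original solution in \cite{Rick1} handles this by a careful choice of complex representatives together with delicate combinatorial bookkeeping; modern approaches bypass the issue using dg-enhancements, forming the endomorphism dg-algebra of $T$, identifying it with $B$ through $H^0$ (the higher cohomology vanishing being exactly condition (T1)), and concluding via standard dg-Morita theory. Either way, once the coherent equivalence on perfect complexes is established, the extension to bounded derived categories is formal, and Theorem \ref{thm:3.1} follows.
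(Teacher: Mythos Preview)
The paper does not prove this theorem at all: it is quoted as Rickard's criterion with a reference to \cite[Theorem 6.4]{Rick1}, and the paper simply uses it as a black box. Your proposal therefore goes well beyond what the paper does, attempting an actual outline of Rickard's argument rather than a citation.

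As a sketch your outline is broadly faithful to the shape of Rickard's proof, and you correctly flag the coherence problem in lifting homotopy classes to strict chain maps as the genuine crux. One point, however, is not right as stated. In the last step of the sufficiency direction you write that the equivalence $\mathcal{K}^b(\proj B)\simeq\mathcal{K}_A$ upgrades to $\mathcal{D}^b(\mod B)\simeq\mathcal{D}^b(\mod A)$ via the embedding $\mathcal{K}^b(\proj A)\hookrightarrow\mathcal{D}^b(\mod A)$ together with ``the existence of projective resolutions for any bounded complex''. This fails exactly in the setting of the paper: for self-injective algebras of infinite global dimension a bounded complex of modules need not admit a \emph{bounded} projective resolution, so $\mathcal{K}^b(\proj A)\hookrightarrow\mathcal{D}^b(\mod A)$ is not essentially surjective and the upgrade does not follow from what you wrote. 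Rickard's actual argument passes instead through $\mathcal{K}^{-,b}(\proj A)$ (bounded-above complexes of projectives with bounded cohomology), which is equivalent to $\mathcal{D}^b(\mod A)$ without any finiteness hypothesis on global dimension, or alternatively works at the level of unbounded derived categories and then restricts. If you want your sketch to stand on its own, that is the step to repair; otherwise, for the purposes of this paper, a bare citation to \cite{Rick1} is all that is expected.
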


Finally, thanks to Happel \cite[see III.1.3-1.4]{Ha} we have the following alternating sum formula allowing to 
compute dimensions of morphisms spaces in homotopy category between summands of tilting complexes. 

\begin{thm}\label{thm:3.2} Let $A$ be an algebra and $Q,R$ two complexes in $\mathcal{K}_A$ for which $\Hom_{\mathcal{K}_A}(Q,R[i])=0$, for all 
integers $i\neq 0$. Then
$$\dim_K\Hom_{\mathcal{K}_A}(Q,R)=\sum_{r,s\in\mathbb{Z}}(-1)^{r-s}\dim_K\Hom_A(Q^r,R^s).$$ \end{thm}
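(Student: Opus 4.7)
The plan is to pass from the pair $(Q, R)$ to the total Hom complex of $K$-vector spaces and apply the standard principle that, for a bounded complex of finite-dimensional vector spaces, the Euler characteristic computed from the terms equals the Euler characteristic computed from the cohomology.

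First I would form the bounded complex of finite-dimensional $K$-vector spaces
$$\Hom_A^\bullet(Q, R), \qquad \Hom_A^n(Q, R) = \prod_{r \in \mathbb{Z}} \Hom_A(Q^r, R^{r-n}),$$
with differential $f \mapsto d_R \circ f - (-1)^n f \circ d_Q$. Boundedness follows from boundedness of $Q$ and $R$, and finite-dimensionality of each term follows because each $Q^r$ and $R^s$ is a finitely generated projective $A$-module over a finite-dimensional algebra.

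Second, I would use the standard identification
$$H^n\!\bigl(\Hom_A^\bullet(Q,R)\bigr) \;\cong\; \Hom_{\mathcal{K}_A}(Q, R[n]),$$
which is a direct unwinding of definitions: $n$-cocycles are exactly chain maps $Q \to R[n]$ and $n$-coboundaries are exactly the null-homotopic such maps. The hypothesis then forces this cohomology to vanish for $n \neq 0$, while for $n = 0$ its $K$-dimension equals $\dim_K \Hom_{\mathcal{K}_A}(Q, R)$.

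Third, I would invoke the elementary fact that for any bounded complex $C^\bullet$ of finite-dimensional $K$-vector spaces,
$$\sum_n (-1)^n \dim_K C^n \;=\; \sum_n (-1)^n \dim_K H^n(C^\bullet),$$
which follows at once from the short exact sequences $0 \to B^n \to Z^n \to H^n \to 0$ and $0 \to Z^n \to C^n \to B^{n+1} \to 0$. Applied to $C^\bullet = \Hom_A^\bullet(Q, R)$, the right-hand side collapses to $\dim_K \Hom_{\mathcal{K}_A}(Q, R)$, while the left-hand side, after setting $s = r - n$ so that $(-1)^n = (-1)^{r-s}$, unpacks to the claimed sum $\sum_{r,s \in \mathbb{Z}} (-1)^{r-s} \dim_K \Hom_A(Q^r, R^s)$. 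I do not anticipate a real obstacle here; the statement is essentially bookkeeping once the Hom complex is in hand. The only mild points of care are fixing a consistent sign convention for the shift $[1]$ and for the differential on $\Hom_A^\bullet(Q,R)$ so that the identification of $H^n$ with homotopy classes of chain maps $Q \to R[n]$ is correct.
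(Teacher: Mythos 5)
Your proof is correct. The paper gives no proof of this statement at all --- it is quoted from Happel \cite[III.1.3--1.4]{Ha} --- and your argument (form the total Hom complex of finite-dimensional $K$-vector spaces, identify its $n$-th cohomology with $\Hom_{\mathcal{K}_A}(Q,R[n])$ so that the hypothesis kills all cohomology outside degree $0$, then equate the Euler characteristic of the terms with that of the cohomology) is exactly the standard proof of the cited result; the sign and indexing issue you flag is genuinely harmless here, since the paper's convention $d^n\colon X^n\to X^{n-1}$ only permutes the grading of the Hom complex, and the final sum over all pairs $(r,s)$ with weight $(-1)^{r-s}$ is invariant under that choice.
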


Note that the following two theorems (see \cite[Corollary 5.3]{Rick3} and \cite[Theorem 2.9]{ES0}) are of 
importance.

\begin{thm}\label{thm:3.3} Let $A$ and $B$ be two derived equivalent algebras. Then $A$ is symmetric if and only if $B$ is symetric.\end{thm}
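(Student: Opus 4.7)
The plan is to invoke Rickard's theory of two-sided tilting complexes \cite{Rick3}, which lifts any derived equivalence between $A$ and $B$ to the level of bimodules. Concretely, a derived equivalence $\calD^b(\mod A)\simeq \calD^b(\mod B)$ is induced by a two-sided tilting complex $X\in \calD^b(\mod(A^{\op}\otimes_K B))$ with ``inverse'' $Y\in \calD^b(\mod(B^{\op}\otimes_K A))$, satisfying $X\otimes^{\mathbf{L}}_B Y\cong A$ in $\calD^b(\mod A^e)$ and $Y\otimes^{\mathbf{L}}_A X\cong B$ in $\calD^b(\mod B^e)$. The first step is to assemble these data into an induced triangle equivalence
$$\Phi \colon \calD^b(\mod A^e)\longrightarrow \calD^b(\mod B^e), \qquad \Phi(M)=Y\otimes^{\mathbf{L}}_A M\otimes^{\mathbf{L}}_A X,$$
which by construction sends the regular bimodule $A$ to $Y\otimes^{\mathbf{L}}_A X\cong B$.

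The second and main step is to check that $\Phi$ intertwines the standard $K$-linear duality $D=\Hom_K(-,K)$ on both sides; that is, $\Phi(DA)\cong D\Phi(A)=DB$ in $\calD^b(\mod B^e)$. This follows from standard bimodule adjunction and duality identities combined with the two-sidedness of the tilting complex $X$, which ensures compatibility between the left and right $\otimes^{\mathbf{L}}$ structures. The main technical point is to track left/right actions carefully throughout the computation; the manipulation is formal but requires bookkeeping.

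Once $\Phi(DA)\cong DB$ has been established, the conclusion is immediate. Assuming $A$ is symmetric, i.e.\ $A\cong DA$ in $\mod A^e$, applying $\Phi$ yields $B\cong \Phi(A)\cong \Phi(DA)\cong DB$ as objects of $\calD^b(\mod B^e)$. Since both $B$ and $DB$ are concentrated in cohomological degree $0$, this derived isomorphism descends to an isomorphism of $B$-bimodules, proving that $B$ is symmetric. The converse follows by swapping the roles of $A$ and $B$ via the quasi-inverse equivalence. The main obstacle is the second step, namely verifying that $\Phi$ commutes with the $K$-linear duality $D$; everything else is a formal consequence of Rickard's theorem on two-sided tilting complexes.
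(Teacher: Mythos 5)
Your outline is essentially the proof of the result the paper is quoting: the paper itself gives no argument for this theorem, citing it directly as \cite[Corollary 5.3]{Rick3}, and what you describe is precisely Rickard's route, so there is no divergence of method to compare. The one caveat is that your ``second and main step'' --- the isomorphism $\Phi(DA)\cong DB$ in $\calD^b(\mod B^e)$ --- is exactly where all of the mathematical content sits, and you assert it rather than prove it. It is true and standard, but for completeness: it rests on the identification $Y\cong \mathbf{R}\Hom_A(X,A)$ of the inverse tilting complex, the adjunction isomorphism $\mathbf{R}\Hom_A(X,DA)\cong \Hom_K(X,K)=DX$ (coming from $\Hom_A(-,\Hom_K(A,K))\cong\Hom_K(-,K)$), and the fact that $X$ is perfect on either side so that $\mathbf{R}\Hom$ commutes with the relevant derived tensor products; chaining these gives $Y\otimes^{\mathbf{L}}_A DA\otimes^{\mathbf{L}}_A X\cong DX\otimes^{\mathbf{L}}_A X\cong D(Y\otimes^{\mathbf{L}}_A X)\cong DB$ as complexes of $B$-$B$-bimodules. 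Your final descent step is fine: $B$ and $DB$ are stalk complexes in degree $0$, and an isomorphism between stalk complexes in the derived category is induced by an isomorphism of the underlying bimodules. So the proposal is correct as a plan, with the understanding that the deferred verification is the theorem.
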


\begin{thm}\label{thm:3.4} Let $A$ and $B$ be two derived equivalent algebras. Then $A$ is periodic if and only if $B$ is periodic, and if this is the case, then they have the same period.\end{thm}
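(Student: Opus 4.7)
The plan is to lift the derived equivalence $\mathcal{D}^b(\mod A)\simeq\mathcal{D}^b(\mod B)$ to a derived equivalence between the enveloping algebras $A^e$ and $B^e$ that sends the regular bimodule ${}_{A}A_{A}$ to ${}_{B}B_{B}$, and then to translate the identity $\Omega^n_{A^e}(A)\simeq A$ into the corresponding identity for $B$ through this equivalence.

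First, by Theorem \ref{thm:3.1} the derived equivalence is realised by a one-sided tilting complex $T\in\mathcal{K}_A$ with $\End_{\mathcal{K}_A}(T)\cong B$. I would invoke Rickard's theorem on two-sided tilting complexes (the main result of \cite{Rick3}), which promotes $T$ to a \emph{two-sided tilting complex} $\Delta$, that is, a bounded complex of $B$-$A$-bimodules satisfying $\Delta\otimes^{L}_{B}\Delta^{*}\simeq A$ in $\mathcal{D}^b(\mod A^e)$ and $\Delta^{*}\otimes^{L}_{A}\Delta\simeq B$ in $\mathcal{D}^b(\mod B^e)$, where $\Delta^{*}=\mathbf{R}\Hom_A(\Delta,A)$. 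Conjugation by $\Delta$ then yields a triangle equivalence $\Phi\colon\mathcal{D}^b(\mod A^e)\to\mathcal{D}^b(\mod B^e)$, $X\mapsto\Delta^{*}\otimes^{L}_{A}X\otimes^{L}_{A}\Delta$, satisfying $\Phi({}_{A}A_{A})\simeq {}_{B}B_{B}$.

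Second, $A$ periodic implies $A$ is self-injective, and since self-injectivity is a derived invariant, so is $B$; consequently the enveloping algebras $A^e=A^{\op}\otimes_K A$ and $B^e$ are both self-injective. In this Frobenius setting Buchweitz's theorem identifies $\underline{\mod}(A^e)$ with the Verdier quotient $\mathcal{D}^b(\mod A^e)/\mathcal{K}^b(\proj A^e)$ as triangulated categories, with the suspension $[1]$ corresponding to $\Omega_{A^e}^{-1}$, and symmetrically for $B$. Since $\Phi$ is a triangle equivalence sending perfect complexes to perfect complexes, it descends to a triangulated equivalence $\underline{\Phi}\colon\underline{\mod}(A^e)\to\underline{\mod}(B^e)$ with $\underline{\Phi}(A)\simeq B$, and this equivalence commutes with the syzygy operators up to natural isomorphism.

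Finally, because $A$ is basic, indecomposable and not semisimple, the regular bimodule ${}_{A}A_{A}$ admits no nonzero projective direct summand in $\mod A^e$, and the same applies to $B$. Hence $\Omega^n_{A^e}(A)\simeq A$ in $\mod A^e$ if and only if this isomorphism holds in $\underline{\mod}(A^e)$, if and only if (applying $\underline{\Phi}$) $\Omega^n_{B^e}(B)\simeq B$ holds in $\underline{\mod}(B^e)$, if and only if it holds in $\mod B^e$. Since this equivalence of conditions is valid for every integer $n\geqslant 1$, the set of periods of $A$ coincides with that of $B$, so $A$ is periodic iff $B$ is, with the same minimal period. The principal obstacle is the first step, namely the existence of the two-sided lift with $\Phi({}_{A}A_{A})\simeq{}_{B}B_{B}$ — this is the nontrivial content of Rickard's theorem and uses the ground field $K$ in an essential way — whereas the subsequent descent to stable categories and the absence of projective summands are formal.
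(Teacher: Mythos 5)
The paper does not prove this statement at all: it is quoted as a known result, with a pointer to \cite[Theorem 2.9]{ES0}, so there is no in-paper argument to compare against. Your proposal reproduces what is essentially the standard proof from that reference — lift the one-sided tilting complex to a two-sided tilting complex $\Delta$, obtain an induced equivalence $\mathcal{D}^b(\mod A^e)\simeq\mathcal{D}^b(\mod B^e)$ carrying $A$ to $B$, pass to the stable categories of the self-injective enveloping algebras, and use that $A$ and $B$ have no projective bimodule summands to upgrade stable isomorphisms to honest ones — and the logic is sound. The only blemishes are notational: with $\Delta$ a complex of $B$-$A$-bimodules and $\Delta^{*}$ of $A$-$B$-bimodules, the correct identities are $\Delta\otimes^{L}_{A}\Delta^{*}\simeq B$ and $\Delta^{*}\otimes^{L}_{B}\Delta\simeq A$, and the conjugation functor should read $X\mapsto\Delta\otimes^{L}_{A}X\otimes^{L}_{A}\Delta^{*}$; as written your tensor products do not typecheck, though this does not affect the substance of the argument.
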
 

Because in the class of self-injective algebras, derived equivalence implies stable equivalence (see \cite[Corollary 2.2]{Rick2} or \cite[Corollary 5.3]{Rick3}), one may conclude from \cite[Theorems 4.4 and 5.6]{CB} and 
\cite[Corollary 2]{KZ} that the following theorem holds. 

\begin{thm}\label{thm:3.5} Let $A$ and $B$ be two derived equivalent self-injective algebras. Then the following 
equivalences hold.
\begin{enumerate}
\item[(1)] $A$ is tame if and only if $B$ is tame.
\item[(2)] $A$ is of polynomial growth if and only if $B$ is of polynomial growth.
\end{enumerate}\end{thm}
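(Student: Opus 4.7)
The plan is to reduce Theorem~\ref{thm:3.5} to two deep results in the literature via the bridge of stable equivalence, exactly as sketched in the paragraph preceding the statement. The first step is to invoke Rickard's theorem (\cite[Corollary 2.2]{Rick2} or \cite[Corollary 5.3]{Rick3}): whenever two self-injective algebras $A$ and $B$ are derived equivalent, their stable module categories $\underline{\mod}\, A$ and $\underline{\mod}\, B$ are equivalent as triangulated $K$-categories, and in fact $A$ and $B$ are stably equivalent of Morita type. This is the crucial upgrade of the derived equivalence to a form of stable equivalence to which the subsequent representation-theoretic invariants apply.

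For part (1), the plan is to apply the Krause--Zwara theorem \cite[Corollary~2]{KZ}, which asserts that stable equivalences of Morita type preserve tameness, combined with Crawley-Boevey's module-theoretic characterization \cite[Theorem~4.4]{CB} of the tameness property in terms that descend to the stable category. Composing this with the stable equivalence produced in the first step yields: $A$ is tame if and only if $B$ is tame.

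For part (2), one appeals to Crawley-Boevey's characterization of polynomial growth \cite[Theorem~5.6]{CB}. Since this characterization can be phrased in terms of parameter families of indecomposable modules up to projective summands, it is invariant under the stable equivalence of Morita type from step one. Hence $A$ is of polynomial growth if and only if $B$ is of polynomial growth.

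The statement is essentially a repackaging of known results for convenient use later in the paper, and no genuine new argument is needed; the only point requiring care is to check that the stable equivalence coming from Rickard's construction (via a two-sided tilting complex) is indeed of Morita type, so that the Krause--Zwara and Crawley-Boevey invariance theorems are applicable. This is precisely what the self-injectivity hypothesis on $A$ and $B$ ensures. The main conceptual obstacle is hidden in the cited literature, namely the proof by Krause and Zwara that tameness is a stable invariant - a result which relies on substantial techniques from bocses and reduction theory - but since it is available as a black box here, no additional work is required.
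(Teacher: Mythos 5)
Your proposal is correct and follows essentially the same route as the paper, which proves the theorem in one sentence by combining Rickard's result that derived equivalent self-injective algebras are stably equivalent with the invariance of tameness and polynomial growth under stable equivalence due to Crawley-Boevey (Theorems 4.4 and 5.6) and Krause--Zwara (Corollary 2). The only superfluous point is your insistence on the stable equivalence being \emph{of Morita type}: the Krause--Zwara theorem applies to arbitrary stable equivalences (it produces a bijection between generic modules preserving endolength), so plain stable equivalence as delivered by \cite[Corollary 2.2]{Rick2} already suffices.
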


\section{Two tilting complexes over higher spherical algebras} 

In this section we define two particular complexes over a given higher spherical algebra $S=S(m,\lambda)$, with 
$m\geqslant 2,\lambda\in K^*$, and we prove that these are tilting complexes in $\mathcal{K}_S$. 

We stick to the notations settled in Section \ref{sec:2} for idempotents $e_i\in S$ and projective modules 
$P_i=e_iS$ in $\ind S$ (also for elements of their bases). We identify arrows $\rho:i\to j$ in $Q_S$ (or in general, 
elements in $e_iSe_j$) with associated homomorphisms $\rho:P_j\to P_i$ of indecomposable projective $S$-modules, 
given by left multiplication by $\rho$. For every $i\in\{1,\dots,6\}$, we will denote by $\textbf{P}_i$ the stalk 
complex in $\mathcal{K}_S$ of the form 
$$0\to P_i\to 0$$
concentrated in degree 0. Moreover, consider the following two 2-term complexes $\bar{\textbf{P}}_4$ and 
$\bar{\textbf{P}}_5$ concentrated in degrees 1 and 0
$$\bar{\textbf{P}}_4: \xymatrix{0 \ar[r] & P_4\ar[r]^{\gamma} & P_3\ar[r] & 0}\quad\mbox{and}\quad \bar{\textbf{P}}_5: \xymatrix{0 \ar[r] & P_5\ar[r]^{\nu} & P_3\ar[r] & 0}.$$

The following technical lemma describes morphisms in $\mathcal{K}_S$ between defined complexes and their shifts. 
In most cases a morphism in $\mathcal{K}_S$ between those complexes is given by a single homomorphism 
$P_i\to P_j$, $i,j\in\{1,\dots,6\}$, in $\proj S$ identified with an element (a combination of paths) in $e_jSe_i$. 

\begin{lem}\label{tech:3.2}
\begin{enumerate}
\item[(a)] For any $\psi$ in $e_4Se_i$ (respectively, in $e_5Se_i$), the corresponding homomorphism $f_0=\psi:P_i\to P_4$ (respectively, $P_i\to P_5$) induces morphism $\textbf{P}_i\to\bar{\textbf{P}}_4[-1]$ (respectively, morphism 
$\textbf{P}_i\to\bar{\textbf{P}}_5[-1]$) in $\mathcal{C}_S$ if and only if $\gamma\psi=0$ (respectively, $\nu\psi=0$) 
in $S$; moreover, this holds iff $\psi\in KX_4^m$ (respectively, $\psi\in KX_5^m$). 

\item[(b)] Every element $\psi\in e_3Se_i$ induces morphism $\textbf{P}_i\to\bar{\textbf{P}}_4$ (respectively, morphism $\textbf{P}_i\to \bar{\textbf{P}}_5$) in $\mathcal{C}_S$ and such a morphism is homotopic to $0$ if and only if $\psi\in \gamma S$ (respectively, $\psi\in \nu S$).

\item[(c)] Let $X$ be a complex in $\mathcal{C}_S$ with $X^1=P_i$, $i\in\{1,\dots,6\}$, and $X^n=0$, for $n\leqslant 0$. Then every element $\psi\in e_iSe_4$ (respectively, in $e_iSe_5$) induces morphism $f=f_1:\bar{\textbf{P}}_4\to 
X$ (respectively, morphism $\bar{\textbf{P}}_5\to X$) in $\mathcal{C}_S$ and $f\simeq 0$ if $\psi\in S\gamma$ 
(repectively, $\psi\in S\nu$). 

\item[(d)] For $\rho_0\in e_3Se_3$ and $\rho_1\in e_5Se_4$ (respectively, $\rho_1\in e_4Se_5$), the pair $f=(\rho_1,\rho_0)$ defines a morphism $\bar{\textbf{P}}_4\to \bar{\textbf{P}}_5$ (respectively, $\bar{\textbf{P}}_5\to 
\bar{\textbf{P}}_4$) in $\mathcal{C}_S$ iff $\nu\rho_1=\rho_0\gamma$ (respectively, $\gamma\rho_1=\rho_0\nu$) and 
in the case, $f\simeq 0$ iff there is $\psi '\in e_5Se_3$ with $\rho_0=\nu\psi '$ and $\rho_1=\psi '\gamma$ 
(respectively, there is $\psi '\in e_4Se_3$ with $\rho_0=\gamma\rho '$ and $\rho_1=\rho '\nu$). 

\item[(e)] An element $\phi\in e_iSe_3$ induces morphism in $\mathcal{C}_S$ of the form $\bar{\textbf{P}}_4
\to\textbf{P}_i$ (respectively, of the form $\bar{\textbf{P}}_5\to\textbf{P}_i$) if and only if $\phi\gamma =0$ 
(respetively, $\phi\nu=0$) in $S$. This is the case iff $i=1$ and $\phi\in K(\rho\omega -\alpha\beta-\lambda(\alpha
\beta\gamma\sigma)^{m-1}\alpha\beta)$ (respectively, $\phi\in K(\rho\omega -\alpha\beta)$) or $\rho\in KX_3^m$.
\end{enumerate}\end{lem}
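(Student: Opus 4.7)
The plan is to handle (a)--(e) by a uniform two-step procedure. First, write down the chain-map condition (or null-homotopy condition) for each diagram using the only nontrivial square that arises, thereby translating it into a single identity in the algebra $S$. Second, solve that identity by expanding the relevant element in the explicit $K$-basis $\mathcal{S}_{ij}$ of $e_iSe_j$ recorded in the Remark, and then using the defining relations (1)--(10) and their consequences (1')--(5'), together with the fact that $X_i^m$ annihilates any arrow incident to $i$ on either side.

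In (a), the shifted complex $\bar{\textbf{P}}_4[-1]$ puts $P_4$ in degree $0$ and $P_3$ in degree $-1$, so a chain map out of the stalk $\textbf{P}_i$ is just a morphism $\psi:P_i\to P_4$; the one square to check yields exactly $\gamma\psi=0$, and the equivalence with $\psi\in KX_4^m$ follows from the Remark, since only the top power $X_4^m$ in $e_4Se_4$ is annihilated on the left by $\gamma$, while for $i\neq 4$ the basis-paths $X_{4i}^r$ are mapped by $\gamma$ to distinct nonzero elements. Part (b) is even simpler: every $\psi\in e_3Se_i$ automatically defines a chain map $\textbf{P}_i\to\bar{\textbf{P}}_4$ (no square to verify), and a null-homotopy is just a factorization $\psi=\gamma s$ with $s:P_i\to P_4$, i.e.\ $\psi\in\gamma S$. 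Part (c) is the dual statement, with null-homotopies corresponding to $\psi\in S\gamma$. Part (d) reads the commuting square as $\nu\rho_1=\rho_0\gamma$ and a null-homotopy as a single map $\psi':P_3\to P_5$ in degree $0$ supplying the factorizations $\rho_0=\nu\psi'$ and $\rho_1=\psi'\gamma$; both conditions are immediate from the two-term shape.

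The heart of the lemma, and the main obstacle, is (e). Unwinding the definition gives $\phi\gamma=0$ in $S$ for $\phi\in e_iSe_3$, and one needs to describe $\ker(\,\cdot\,\gamma)\cap e_iSe_3$ case by case in $i$. The crucial case is $i=1$: using $\mathcal{S}_{13}=\{X_{13}^r:0\le r\le m-1\}\cup\{\tilde{X}_{13}\}$ with $X_{13}=\alpha\beta$ and $\tilde{X}_{13}=\rho\omega$, one computes $X_{13}^r\gamma=X_1^r(\alpha\beta\gamma)$, which vanishes precisely at $r=m$ by the Remark, whereas $\tilde{X}_{13}\gamma=\rho\omega\gamma=\alpha\beta\gamma+\lambda(\alpha\beta\gamma\sigma)^{m-1}\alpha\beta\gamma$ by relation (4); subtracting yields the displayed kernel element $\rho\omega-\alpha\beta-\lambda(\alpha\beta\gamma\sigma)^{m-1}\alpha\beta$, together with the scalar multiples of $X_1^m$. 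The case $i=3$ is handled by the Remark itself and gives $\phi\in KX_3^m$, and the remaining $i\in\{2,4,5,6\}$ collapse to $\phi=0$ once one checks, using the bases $\mathcal{S}_{i3}$, that right multiplication by $\gamma$ is injective on non-top-degree basis paths from $3$ to $i$. The parallel statement with $\nu$ in place of $\gamma$ is handled identically, with relation (2) replacing (4), which accounts for the absence of the correction term in the kernel element. The principal difficulty is the disciplined case-by-case accounting needed in (e); everything else is a direct translation between chain-level diagrams and module-level identities inside $S$.
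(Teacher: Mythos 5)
Your proposal is correct and follows essentially the same route as the paper: parts (b)--(d) and the first equivalences in (a) and (e) are read off directly from the chain-map and homotopy conditions, and the remaining equivalences are settled by expanding $\phi$ in the explicit bases $\mathcal{S}_{ij}$ and using relations (1)--(10), (1$'$)--(5$'$), with the decisive case being $i=1$ in (e), where $\tilde{X}_{13}\gamma=X^0_{14}+\lambda X^{m-1}_{14}$ versus $\tilde{X}_{13}\nu=X^0_{15}$ produces exactly the two displayed kernel generators. One small slip: your phrase ``together with the scalar multiples of $X_1^m$'' is spurious, since $X_1^m\in e_1Se_1$ (and $X_1^m\alpha\beta=0=X_1^m\rho\omega$ by (9) and (1$'$)), so the kernel of right multiplication by $\gamma$ on $e_1Se_3$ is exactly one-dimensional.
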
 

\begin{proof} Observe that conditions b)-d) and first equivalences in a) and e) follow immediately from definition of morphisms in $\mathcal{C}_S$ and homotopy relation (note also that composition of morphisms corresponds to 
appropriate multiplying of elements in $S$). Let us sketch the proof of remaining equivalences in a) and e). Observe 
first that, if $i\in\{2,6\}$, then for any $\phi\in e_iSe_3$ both $\phi\gamma$ and $\phi\nu$ are nonzero. Next, for 
$i=3$, it is also easy to check that the only generator $\phi=X_3^k$ of $e_3Se_3$ with $\phi\gamma=\phi\nu=0$ is 
$\phi=X_3^m$, so it remains to consider the case $i=1$. Then, for any base element $\phi=X_{13}^k$ with $k\in 
\{0,\dots,m-1\}$ we have $\phi\gamma=X_{14}^k$ and $\phi\nu=X^k_{15}$, whereas $\tilde{X}_{13}\gamma=X^0_{14}+
\lambda X^{m-1}_{14}$ and $\tilde{X}_{13}\nu=X^0_{15}$. As a result, for any element $\phi\in e_1 Se_3$, we have 
$\phi\nu=0$ if and and only if $\phi\in K(\rho\omega-\alpha\beta)$. In a very similar way, we deduce that 
$\phi\gamma=0$ if and only if $\phi$ belongs to $K(\tilde{X}_{13}-X^0_{13}-\lambda X^{m-1}_{13})$, which finishes 
the proof of e). It is straightforward calculation to check that for any $\phi$ in $e_4Se_i$ we have: $\gamma\phi=0$ 
if and only if $i=4$ and $\phi\in KX_4^m$. Similarily, for $\phi\in e_5Se_i$.\end{proof} 

Now, we may almost directly deduce the following proposition. 

\begin{proposition}\label{prop:4.2} The following two complexes 
$$T^{(1)}=\textbf{P}_1\oplus \textbf{P}_2 \oplus \textbf{P}_3 \oplus \textbf{P}_4 \oplus \bar{\textbf{P}_5} \oplus \textbf{P}_6\quad\mbox{and}\quad 
T^{(2)}=\textbf{P}_1\oplus \textbf{P}_2 \oplus \textbf{P}_3 \oplus \bar{\textbf{P}_4} \oplus \bar{\textbf{P}_5} \oplus \textbf{P}_6$$ 
in $\mathcal{K}_S$ are tilting complexes. \end{proposition}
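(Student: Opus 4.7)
The plan is to verify the two conditions (T1) and (T2) separately, using Lemma \ref{tech:3.2} together with the standard mapping-cone triangles for $\bar{\textbf{P}}_4$ and $\bar{\textbf{P}}_5$.

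For condition (T2), observe that the morphisms of stalk complexes $\textbf{P}_4\xrightarrow{\gamma}\textbf{P}_3$ and $\textbf{P}_5\xrightarrow{\nu}\textbf{P}_3$ have mapping cones $\bar{\textbf{P}}_4$ and $\bar{\textbf{P}}_5$ respectively, giving rise to distinguished triangles
$$\textbf{P}_4\to\textbf{P}_3\to\bar{\textbf{P}}_4\to\textbf{P}_4[1]\quad\mbox{and}\quad \textbf{P}_5\to\textbf{P}_3\to\bar{\textbf{P}}_5\to\textbf{P}_5[1]$$
in $\mathcal{K}_S$. These place the stalks not appearing as direct summands of $T^{(j)}$ -- namely $\textbf{P}_5$ for $T^{(1)}$, and both $\textbf{P}_4$ and $\textbf{P}_5$ for $T^{(2)}$ -- into the triangulated subcategory generated by the summands. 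Since $\textbf{P}_1,\dots,\textbf{P}_6$ generate $\mathcal{K}_S$ as a triangulated category, so does $\add T^{(j)}$.

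For condition (T1) I verify $\Hom_{\mathcal{K}_S}(T^{(j)},T^{(j)}[k])=0$ for every $k\neq 0$ by running through pairs of summands. Pairs of stalk complexes are trivial. For every other pair, the concentration of $\bar{\textbf{P}}_4$ and $\bar{\textbf{P}}_5$ in degrees $0$ and $1$ restricts the possibly nonzero range of $k$ to $\{-1\}$ for $\Hom(\textbf{P}_i,\bar{\textbf{P}}_j[k])$, to $\{1\}$ for $\Hom(\bar{\textbf{P}}_j,\textbf{P}_i[k])$, and to $\{-1,1\}$ for $\Hom(\bar{\textbf{P}}_j,\bar{\textbf{P}}_{j'}[k])$; outside this range every chain map already has vanishing components in all degrees. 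In each surviving case, Lemma \ref{tech:3.2} -- or, for the $\pm 1$ shifts not covered verbatim, a direct homotopy computation modelled on its proof -- identifies the morphism space with a quotient of some $e_iSe_j$-subspace, and one then verifies vanishing using the bases $\mathcal{S}_{ij}$ and the relations (1)--(10), (1')--(5') from Section \ref{sec:2}.

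The first two families require little work. Lemma \ref{tech:3.2}(a) yields $\Hom_{\mathcal{K}_S}(\textbf{P}_i,\bar{\textbf{P}}_j[-1])=0$ unless $i=j\in\{4,5\}$, and neither index is a stalk summand of the relevant $T^{(j)}$. A standard homotopy argument identifies $\Hom_{\mathcal{K}_S}(\bar{\textbf{P}}_j,\textbf{P}_i[1])$ with $(e_iSe_j)/(e_iSe_3\cdot\xi)$ for $\xi\in\{\gamma,\nu\}$, and this quotient vanishes because $\gamma$ and $\nu$ are the only arrows of $Q_S$ terminating at vertices $4$ and $5$, so every path from $i$ to $4$ or to $5$ factors through vertex $3$. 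The main obstacle is the final family $\Hom_{\mathcal{K}_S}(\bar{\textbf{P}}_j,\bar{\textbf{P}}_{j'}[\pm 1])$: both commutativity conditions of the chain maps must be imposed simultaneously, and the vanishing then hinges on a short list of identities -- for instance $\nu X_{53}^r=X_3^{r+1}\neq 0$ for $0\leqslant r\leqslant m-1$ together with $X_{53}^{m-1}\gamma=0$, deduced from (5') and (4') -- that together rule out any nonzero solution. These bimodule calculations are elementary but must be tracked case by case.
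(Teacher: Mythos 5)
Your proposal is correct and follows essentially the same route as the paper: for (T1) the same degree bookkeeping reduces everything to Lemma \ref{tech:3.2} plus explicit computations in the spaces $e_iSe_j$ (using that $\gamma$ and $\nu$ are the only arrows ending at $4$ and $5$, and the identities from Section \ref{sec:2}), and for (T2) your triangle $\textbf{P}_4\to\textbf{P}_3\to\bar{\textbf{P}}_4\to\textbf{P}_4[1]$ is just a rotation of the paper's mapping-cone argument $C_f\simeq\textbf{P}_4[1]$. The only details you defer are the case-by-case basis checks that the paper likewise leaves as ``direct calculation.''
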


\begin{proof} First we prove that $T^{(2)}$ is a tilting complex. Denote by $T_1,\dots,T_6$ its indecomposable direct 
summands in order fixed above. Then it follows directly from a) above that 
$\Hom_{\mathcal{C}_S}(T_k,(T_4\oplus T_5)[-1])=0$, for any $k\neq 4$ and $5$, hence also 
$\Hom_{\cC_S}(T_1\oplus T_2\oplus T_3\oplus T_6,(T_4\oplus T_5)[i])=0$, for $i < 0$, and consequently, for all 
integers $i\neq 0$, because $(T_4\oplus T_5)[i]$, for $i>0$, is concentrated in positive degrees, while $T_k$ with 
$k\neq 4,5$, in degree 0. Trivially, we have $\Hom_{\cC_S}(T_k,T_j[i])=0$, for any integer $i\neq 0$ and $k,j\in 
\{1,2,3,6\}$, so $\Hom_{\cC_S}(T_1\oplus T_2\oplus T_3\oplus T_6,T^{(2)}[i])=0$, for all $i\neq 0$. 

Further, if any complex $X$ of the form $X=T_k[1]$ satisfies $X^n=0$, for all $n\leqslant 0$ and $X^1=P_t$ with 
$t\neq 4,5$. But $e_3Se_4=e_3(\rad S)e_4\subset S\gamma$, as well as $e_3Se_5=e_3(\rad S)e_5 \subset S\nu$, because 
all nontrivial paths ending at $4$ (or $5$) pass through $\gamma$ (respectively, through $\nu$), so we conclude from 
c) in the previous Lemma that $\Hom_{\mathcal{K}_S}(T_4\oplus T_5,T^{(2)}[1]))=0$, and hence we have 
$\Hom_{\mathcal{K}_S}(T_4\oplus T_5,T^{(2)}[i])=0$, for any $i>0$. Obviously, if $k\neq 4,5$, we obtain 
$\Hom_{\cC_S}(T_4\oplus T_5,T_k[i])=0$, for any $i<0$, and finally $\Hom_{\cC_S}(T_k,T_l[-1])=0$, for all 
$k,l\in\{4,5\}$, since every such chain map $T_k\to T_l[-1]$ is given by an element $\psi\in e_lSe_3= e_l(\rad S)e_3$ 
with $\gamma\psi\neq 0$ or $\nu\psi\neq 0$, if $l=4$ or $5$, respectively; see also Lemma \ref{tech:3.2}a). It 
follows that $\Hom_{\mathcal{K}_S}(T_4\oplus T_5,T^{(2)}[i])=0$, also for all $i<0$. Concluding, we obtain that 
$\Hom_{\mathcal{K}_S}(T^{(2)},T^{(2)}[i])=0$, for any integer $i\neq 0$, and consequently, $T^{(2)}$ satisfies 
condition (T1) from definition of a tilting complex. 

Now, it remains to see that the identity homomorphism on $P_3$ induce two morphisms (in degree 0) of complexes 
$f:T_3\to T_4$ and $g:T_3\to T_5$ and mapping cones $C_f$ and $C_g$ (of $f$ and $g$), are complexes in 
$\mathcal{K}_S$, of the following forms 
$$\xymatrix{P_3\oplus P_4\ar[r]^{[1\mbox{ }\gamma]} & P_3}\quad\mbox{and}\quad\xymatrix{P_3\oplus 
P_5\ar[r]^{[1\mbox{ }\nu]} & P_3}$$
But then projections $P_3\oplus P_4\to P_4$ and $P_3\oplus P_5\to P_5$ induce morphisms (in degree 1) of the forms 
$f':C_f\to\textbf{P}_4[1]$ and $g':C_g\to\textbf{P}_5[1]$, for which cones $C_{f'}$ and $C_{g'}$ are easy to check 
being exact, so $f'$ and $g'$ are quasi-isomorphisms, and thus homotopy equivalences (we deal with bounded above and 
below complexes of projective $S$-modules). This means that $C_f\simeq \textbf{P}_4[1]$ and $C_g\simeq 
\textbf{P}_5[1]$ in $\mathcal{K}_S$, and therefore triangulated category generated by $\add(T^{(2)})$ contains all 
stalk complexes $\textbf{P}_i$, for $i\in\{1,\dots,6\}$, together with their shifts. Consequently, $\add(T^{(2)})$ 
generates $\mathcal{K}_S$ as a triangulated category, and hence $T^{(2)}$ is indeed a tilting complex in 
$\mathcal{K}_S$. 

Let now $T_1,\dots,T_6$ be all indecomposable direct summands of $T^{(1)}$ ordered as in the statement. Using 
analogues of above arguments one can easily prove that also $T^{(1)}$ satisfies condition (T2). For (T1), observe 
that $\Hom_{\mathcal{K}_S}(T_k,T_l[i])=0$, for all $k,l\neq 5$ and $i\neq 0$, because all these complexes are stalk 
in degree 0. Finally, note that every chain map $\tilde{\phi}:T_5\to T_k[1]$ is given by an element $\phi$ in $e_k 
Se_5$, hence for $k\neq 5$ we have $\phi\in S\nu$, and so $\tilde{\phi}\sim 0$, by Lemma \ref{tech:3.2}c). This 
shows $\Hom_{\mathcal{K}_S}(T_5,T_k[i])=0$, for all $k\neq 5$ and $i\neq 0$. Including the above considerations for 
$T^{(2)}$, we have $\Hom_{\mathcal{K}_S}(T_5,T_5[i])=0$, for $i\neq 0$, thus $\Hom_{\mathcal{K}_S}(T_5,
T^{(1)}[i])=0$, for any integer $i\neq 0$. Direct calculation on bases of spaces $e_5 Se_k$, with $k\neq 5$, shows 
that for any nonzero $\phi\in e_5 Se_k$ we have $\nu\phi \neq 0$. As a result, we obtain 
$\Hom_{\mathcal{K}_S}(T_k,T_5[-1])=0$, and consequently, $\Hom_{\mathcal{K}_S}(T_k,T_5[i])=0$, for any $k\neq 5$ and 
$i\neq 0$. It is now clear that $\Hom_{\mathcal{K}_S}(T^{(1)},T^{(1)}[i])=0$, for all integers $i\neq 0$, that is, 
$T^{(1)}$ satisfies (T1), and therefore it is indeed a tilting complex in $\mathcal{K}_S$.\end{proof}

\section{Proof of Theorem \ref{mainthm}} Since we have already constructed appropriate tilting complexes in 
$\mathcal{K}_S$, we conclude from Theorem \ref{thm:3.1} that the algebra $S=S(m,\lambda)$ is derived equivalent to 
both algebras $E:=\End_{\mathcal{K}_S}(T^{(1)})$ and $F:=\End_{\mathcal{K}_S}(T^{(2)})$. According to Theorems 
\ref{thm:3.3} and \ref{thm:3.4}, it remains to show that there are isomorphisms $E\cong E(m,\lambda)$ and 
$F\cong F(m,\lambda)$ of $K$-algebras, and then Theorems \ref{mainthm} and \ref{cor:1} will automatically follow 
(showing isomorphisms we will also compute dimensions of $E$ and $F$). 

The following two lemmas are devoted to compute Cartan matrices of algebras $E$ and $F$. 

\begin{lem} The Cartan matrix $C_{F}$ of $F$ is of the form
$$\left[ \begin{array}{cccccc} 
m+1&m&m+1&1&1&m \\
m&m+1&m&0&0&m-1 \\
m+1&m&m+1&1&1&m \\
1&0&1&2&0&0 \\
1&0&1&0&2&0 \\
m&m-1&m&0&0&m+1 \end{array}\right].$$
\end{lem}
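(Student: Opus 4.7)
The plan is to invoke Happel's alternating sum formula (Theorem~\ref{thm:3.2}), whose hypothesis is guaranteed by Proposition~\ref{prop:4.2}: since $T^{(2)}$ is a tilting complex, one has $\Hom_{\mathcal{K}_S}(T_j,T_i[k])=0$ for every $k\neq 0$ and every $i,j\in\{1,\dots,6\}$, where $T_1,\dots,T_6$ are the indecomposable summands of $T^{(2)}$ in the order fixed in Proposition~\ref{prop:4.2} (so $T_4=\bar{\textbf{P}}_4$, $T_5=\bar{\textbf{P}}_5$, and $T_k=\textbf{P}_k$ for $k\in\{1,2,3,6\}$). Since $F=\End_{\mathcal{K}_S}(T^{(2)})$ and the primitive idempotents of $F$ correspond to $T_1,\dots,T_6$,
$$c^F_{ij}=\dim_K\Hom_{\mathcal{K}_S}(T_j,T_i)=\sum_{r,s\in\mathbb{Z}}(-1)^{r-s}\dim_K\Hom_S(T_j^r,T_i^s),$$
and each summand on the right is one of the Cartan numbers $c^S_{\bullet\bullet}$ recorded in the matrix displayed at the end of Section~\ref{sec:2}.

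Next, I would split the computation into four regions according to whether $i$ and $j$ lie in the stalk set $\{1,2,3,6\}$ or in the two-term set $\{4,5\}$. For $i,j\in\{1,2,3,6\}$ the alternating sum collapses to a single term $c^F_{ij}=c^S_{ij}$, so the four rows and columns indexed by $\{1,2,3,6\}$ are copied verbatim from $C_S$. For $i\in\{1,2,3,6\}$ and $j\in\{4,5\}$ the two components of $T_j$ contribute and yield $c^F_{ij}=c^S_{i3}-c^S_{ij}$; symmetrically, for $i\in\{4,5\}$ and $j\in\{1,2,3,6\}$, $c^F_{ij}=c^S_{3j}-c^S_{ij}$. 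Finally, for the $2\times 2$ block with $i,j\in\{4,5\}$ all four components contribute and
$$c^F_{ij}=c^S_{33}-c^S_{i3}-c^S_{3j}+c^S_{ij}.$$

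All that remains is routine substitution from $C_S$. As a sanity check: $(i,j)=(4,1)$ gives $(m+1)-m=1$, $(4,2)$ gives $m-m=0$, $(4,4)$ gives $(m+1)-m-m+(m+1)=2$, and $(4,5)$ gives $(m+1)-m-m+(m-1)=0$, each agreeing with the claimed matrix; the remaining entries are verified in exactly the same way. There is no genuine obstacle here beyond careful index bookkeeping; the substantive work was already done in Proposition~\ref{prop:4.2}, where the vanishing of higher Homs in the homotopy category was established, so that $C_F$ is determined by purely numerical arithmetic on $C_S$.
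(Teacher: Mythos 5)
Your proposal is correct and follows essentially the same route as the paper: both invoke Happel's alternating sum formula (Theorem \ref{thm:3.2}), justify its hypothesis via condition (T1) for the tilting complex $T^{(2)}$ from Proposition \ref{prop:4.2}, and split the computation into the same four blocks with the identical formulas $c^F_{ij}=c^S_{ij}$, $c^S_{i3}-c^S_{ij}$, $c^S_{3j}-c^S_{ij}$, and $c^S_{33}-c^S_{i3}-c^S_{3j}+c^S_{ij}$. (Your transposed convention $c^F_{ij}=\dim_K\Hom_{\mathcal{K}_S}(T_j,T_i)$ is harmless here since $C_S$, and hence $C_F$, is symmetric.)
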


\begin{proof} Denote by $c_{ij}$ the ($i$,$j$) entry of $C_F$ and by $T_1,\dots,T_6$ all indecomposable direct 
summands of tilting complex $T=T^{(2)}$, ordered as presented in Proposition \ref{prop:4.2}. By definition $E$ is 
the endomorphism algebra of $T$, so modules $\tilde{P}_i:=\Hom_{\mathcal{K}_S}(T,T_i)$, for $i\in\{1,\dots,6\}$, form 
a complete set of pairwise non-isomorphic indecomposable projective modules in $\ind F$, and moreover there are 
isomorphisms of vector spaces $\Hom_{F}(\tilde{P}_i,\tilde{P}_j)\simeq\Hom_{\mathcal{K}_S}(T_i,T_j)$. Consequently, 
for all $i,j\in \{1,\dots,6\}$ we have $c_{ij}=\dim_K\Hom_{\mathcal{K}_S}(T_i,T_j)$ and complexes $T_i,T_j$ satisfy 
assumptions of Theorem \ref{thm:3.2}. Therefore we can directly calculate ($c^S_{ij}$ denote the corresponding entry 
of Cartan matrix $C_S$ of $S$) that: 
\begin{itemize}
\item $c_{ij}=\dim_K\Hom_{\mathcal{K}_S}(\textbf{P}_i,\textbf{P}_j)=\dim_K\Hom_S(P_i,P_j)=c^S_{ij}$, for any $i,j\in\{1,2,3,6\}$,
\item for $i\in\{4,5\}$ and $j\in\{1,2,3,6\}$, we have  
$$c_{ij}=\dim_K\Hom_{\mathcal{K}_S}(\bar{\textbf{P}}_i,\textbf{P}_j)=\dim_K\Hom_S(P_3,P_j)-\dim_K\Hom_S(P_i,P_j)=$$ 
$$=c^S_{3j}-c^S_{ij},\qquad\qquad\qquad\qquad\qquad\qquad\qquad\qquad\qquad\qquad\qquad\qquad$$
\item similarily, $c_{ij}=\dim_K\Hom_{\mathcal{K}_S}(\textbf{P}_i,\bar{\textbf{P}}_j)=c^S_{i3}-c^S_{ij}$, if $i\in\{1,2,3,6\}$ and $j\in\{4,5\}$ 
\item and $c_{ij}=c^S_{33}-c^S_{3j}-c^S_{i3}+c^S_{ij}$, for $i,j\in\{4,5\}$.
\end{itemize}
As a result, for any $j\in\{1,2,3,6\}$, we obtain
$$c_{4j}=c^S_{3j}-c^S_{4j}=c^S_{3j}-c^S_{5j}=c_{5j}=\left\{\begin{array}{l}1,\quad\mbox{ when }j\in\{1,3\} \\ 0,\mbox{when }j\in\{2,4\}\end{array}\right.$$ 
and, if $i\in\{1,2,3,6\}$, then $c_{i4}=c^S_{i3}-c^S_{i4}=c^S_{i3}-c^S_{i5}=c_{i5}=c_{4i}$.
 
Finally, simple chcecking shows that 
$$ c_{44}=c^S_{33}-c^S_{34}-c^S_{43}+c^S_{44}=c_{55}=$$
$$=m+1-m-m+m+1=2\qquad$$
and $c_{45}=c_{54}=m+1-m-m+m-1=0$, which completes the proof.\end{proof}

\begin{lem} The Cartan matrix $C_{E}$ of $E$ is of the form
$$\left[ \begin{array}{cccccc} 
m+1&  m &m+1 &  m & 1 & m \\
m  & m+1& m  &  m & 0 &m-1\\
m+1&  m &m+1 &  m & 1 & m \\
m  &  m & m  & m+1& 1 & m \\
1  &  0 & 1  &  1 & 2 & 0 \\
m  & m-1& m  &  m & 0 & m+1 \end{array}\right]$$
\end{lem}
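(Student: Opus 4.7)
The plan is to follow the same pattern as the previous lemma, replacing the two-term complex $\bar{\textbf{P}}_4$ by the stalk complex $\textbf{P}_4$ and keeping $\bar{\textbf{P}}_5$ in position $5$. So only the entries involving index $5$ will differ from those of the original Cartan matrix $C_S$. First I would fix the labelling $T_1,\dots,T_6$ of the indecomposable summands of $T^{(1)}$ in the order stated in Proposition \ref{prop:4.2}, recall that $c_{ij}=\dim_K\Hom_{\mathcal{K}_S}(T_i,T_j)$, and verify that the hypothesis of Theorem \ref{thm:3.2} is met for every pair $(T_i,T_j)$, which is granted by the tilting property of $T^{(1)}$ established in Proposition \ref{prop:4.2}.

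Next I would apply Happel's alternating sum formula. For $i,j\in\{1,2,3,4,6\}$ both complexes $T_i$ and $T_j$ are stalks in degree $0$, so the sum reduces to a single term and $c_{ij}=\dim_K\Hom_S(P_i,P_j)=c^S_{ij}$, which yields the entries of $C_E$ outside row/column $5$ directly from the Cartan matrix of $S$ recalled in Section \ref{sec:2}. For the row $i=5$ and a column $j\neq 5$, the complex $T_5=\bar{\textbf{P}}_5$ is concentrated in degrees $0,1$ and $T_j$ in degree $0$, hence Theorem \ref{thm:3.2} gives
\[
c_{5j}=\dim_K\Hom_S(P_3,P_j)-\dim_K\Hom_S(P_5,P_j)=c^S_{3j}-c^S_{5j}.
\]
Symmetrically $c_{i5}=c^S_{i3}-c^S_{i5}$ for $i\neq 5$. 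Finally, for the diagonal entry,
\[
c_{55}=c^S_{33}-c^S_{35}-c^S_{53}+c^S_{55}=(m+1)-m-m+(m+1)=2.
\]

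Plugging the entries of $C_S$ from Section \ref{sec:2} into these formulas produces exactly the matrix in the statement: rows and columns with index in $\{1,2,3,4,6\}$ coincide with the corresponding block of $C_S$, while the fifth row and fifth column become $(1,0,1,1,2,0)$, as required. There is no real obstacle here; the only point that requires a moment of care is the diagonal entry $c_{55}$, and more broadly the verification that the summands involved actually satisfy the vanishing hypothesis of Theorem \ref{thm:3.2}, but both points are already taken care of by Proposition \ref{prop:4.2}. The computation is therefore a direct specialization of the argument used in the preceding lemma to the tilting complex $T^{(1)}$.
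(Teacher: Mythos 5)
Your proposal is correct and follows essentially the same route as the paper: apply Happel's alternating sum formula to the summands of $T^{(1)}$, so that $c_{ij}=c^S_{ij}$ for $i,j\neq 5$, $c_{5j}=c^S_{3j}-c^S_{5j}$ and $c_{i5}=c^S_{i3}-c^S_{i5}$, with $c_{55}=c^S_{33}-c^S_{35}-c^S_{53}+c^S_{55}=2$. The only cosmetic difference is that the paper cites the preceding lemma for $c_{55}=2$ rather than recomputing it, which amounts to the same calculation.
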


\begin{proof} As before, we denote by $c_{ij}$ the corresponding entry of $C_E$. Obviously, it follows from Theorem 
\ref{thm:3.2}, that for stalk complexes $T_i,T_j$, with $i,j\neq 5$, we have $\dim_K \Hom_{\mathcal{K}_S}(T_i,T_j)= 
\dim_K\Hom_S(P_i,P_j)$, and therefore $c_{ij}=c^S_{ij}$. For any $j\neq 5$, we deduce also from Theorem \ref{thm:3.2} 
that 
$$c_{5j}=\dim_K\Hom_{\mathcal{K}_S}(\bar{\textbf{P}}_5,\textbf{P}_j)=\dim_K\Hom_S(P_3,P_j)-\dim_K\Hom_S(P_5,P_j)=$$
$$=c^S_{3j}-c^S_{5j},\qquad\qquad\qquad\qquad\qquad\qquad\qquad\qquad\qquad\qquad\qquad\qquad$$ 
and so the fifth row of $C_E$ is of the form $\left[\begin{array}{cccccc} 1 & 0 & 1 & 1 & 2 & 0 \end{array}\right]$ (it follows from the last proof that $c_{55}=\dim_K\Hom_{\mathcal{K}_S}(T_5,T_5)=2$). In a similar way, applying 
Theorem \ref{thm:3.2} to complexes $T_{i\neq 5}$ and $T_5$, we deduce that $c_{i5}=c^S_{i3}-c^S_{i5}$, so the fifth 
column is equal to transpose of fifth row, and the proof is now complete.\end{proof} 

In the next two lemmas we shall investigate Gabriel quivers. 
  
\begin{lem} The Gabriel quivers of $F$ and $F(m,\lambda)$ coincide. \end{lem}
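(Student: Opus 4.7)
The plan is to verify, for each pair of vertices $(i,j) \in \{1,\ldots,6\}^2$, that the number of arrows $i \to j$ in $Q_F$ equals $\dim_K e_i (\rad F/\rad^2 F) e_j$. Using the identification $e_i F e_j \cong \Hom_{\mathcal{K}_S}(T_j, T_i)$ coming from the tilting equivalence, this translates into a question about chain maps modulo composites of two radical maps. The dimensions $\dim_K e_i F e_j = c_{ji}$ are already recorded in $C_F$, so what remains is to describe $e_i \rad^2 F e_j$ for each pair.

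For each of the ten arrows of $Q_F$ I would exhibit an explicit representing chain map. The arrows $\alpha_1, \alpha_2, \alpha_3, \alpha_4$ are represented by the arrows $\alpha, \beta, \rho, \omega$ of $Q_S$, viewed as $S$-module maps between the stalk summands $\textbf{P}_i$. By Lemma~\ref{tech:3.2}(b), the arrows $\beta_2 : 5 \to 3$ and $\beta_4 : 4 \to 3$ correspond to the chain maps $\textbf{P}_3 \to \bar{\textbf{P}}_5$ and $\textbf{P}_3 \to \bar{\textbf{P}}_4$ given in degree $0$ by the identity of $P_3$. Dually, by Lemma~\ref{tech:3.2}(e), the arrows $\beta_1 : 1 \to 5$ and $\beta_3 : 1 \to 4$ are represented by the chain maps $\bar{\textbf{P}}_5 \to \textbf{P}_1$ and $\bar{\textbf{P}}_4 \to \textbf{P}_1$ whose degree-$0$ components are $\rho\omega - \alpha\beta$ and $\rho\omega - \alpha\beta - \lambda X_{13}^{m-1}$. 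Finally, the two arrows $\gamma_1, \gamma_2 : 3 \to 1$ correspond to the classes of $X_{31} = \gamma\sigma$ and $\tilde{X}_{31} = \nu\delta$ in $e_3 S e_1$.

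Next I would bound $e_i \rad^2 F e_j$ by enumerating the possible factorizations $T_j \to T_k \to T_i$ through each summand with both factors radical. Factorizations through the stalk summands $T_1, T_2, T_3, T_6$ can be carried out inside $S$ using relations (1)--(10) and (1')--(5'); they produce precisely the higher basis elements $X_{ij}^r$ with $r \geq 1$ of each $\Hom$ space, showing these lie in $\rad^2 F$. Factorizations through $\bar{\textbf{P}}_4$ and $\bar{\textbf{P}}_5$ are sharply constrained by the many zero entries in rows and columns $4,5$ of $C_F$, and the few surviving composites can be computed directly. This disposes of every non-arrow pair $(i,j)$: for instance, $\rad\End(T_i)$ with $T_i$ a stalk is generated by the $4$-cycle $X_i$, which is a composition of two radical maps in $F$; and the non-identity endomorphism of $\bar{\textbf{P}}_\ell$ for $\ell \in \{4,5\}$ factors as $\bar{\textbf{P}}_\ell \to \textbf{P}_3 \to \bar{\textbf{P}}_\ell$ through the $1$-dimensional hom spaces of Lemma~\ref{tech:3.2}, so no loops appear at $4$ or $5$.

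The main obstacle is the case $(i,j) = (3,1)$, where two arrows must appear. One needs both $[X_{31}]$ and $[\tilde{X}_{31}]$ to be nonzero and independent in the quotient. The suspect $\rad^2$-contributions come from the composites $\textbf{P}_1 \to \bar{\textbf{P}}_\ell \to \textbf{P}_3$ for $\ell \in \{4,5\}$, which pass through the $1$-dimensional spaces $\Hom_{\mathcal{K}_S}(\textbf{P}_1, \bar{\textbf{P}}_\ell)$ and $\Hom_{\mathcal{K}_S}(\bar{\textbf{P}}_\ell, \textbf{P}_3)$ of Lemma~\ref{tech:3.2}. Using the relations of $S(m,\lambda)$, one computes that the degree-$0$ products appearing here are scalar multiples of $X_3^m \cdot \nu\delta$ and $X_3^m \cdot \gamma\sigma$, both of which vanish by relation (10). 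Consequently neither $\tilde{X}_{31}$ nor $X_{31}$ lies in $\rad^2 F$, so $\dim e_3 \rad^2 F e_1 = m - 1$ and $\dim e_3 (\rad F / \rad^2 F) e_1 = 2$, matching $\gamma_1, \gamma_2$ and completing the identification of the Gabriel quiver of $F$ with $Q_F$.
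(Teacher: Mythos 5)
Your proposal is correct and follows essentially the same route as the paper: identify $\rad F/\rad^2 F$ via $\Hom_{\mathcal{K}_S}(T_j,T_i)$, exhibit the same explicit representatives for the ten arrows (your $\gamma_2=\gamma\sigma$ differs from the paper's $\gamma\sigma+\lambda(\gamma\sigma\alpha\beta)^{m-1}\gamma\sigma$ only by an element of $\rad^2$, which is immaterial here), use the zero entries of $C_F$ to kill most non-arrow pairs, and settle $q_{31}=2$ and the absence of loops at $4,5$ by the same direct computations. Only a cosmetic remark: $X_3^m\nu\delta=(\gamma\sigma\alpha\beta)^m\nu\delta$ vanishes by the derived relation $(3')$ rather than by relation $(10)$, and the vanishing of $q_{34},q_{35}$ still requires exhibiting the factorization $X_3^m=-\lambda^{-1}\gamma_1\beta_3$ through $T_1$, which your framework accommodates but does not spell out.
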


\begin{proof} We fix order (and labeling) of vertices in $Q_F$ induced by order of direct summands $T_1,\dots,T_6$ of 
tilting complex $T=T^{(2)}$. For $i,j\in \{1,\dots,6\}$ let $F_{ij}$ denote the $K$-vector space $U/V$, where $U$ is 
a subspace of $\Hom_{\mathcal{K}_S}(T_i,T_j)$ formed by all non-isomorphisms and $V$ its subspace containing all 
compositions of two nonisomorphisms $T_i\to T'\to T_j$ with $T'$ an object in $\add(T)$. It is well known that 
$F_{ij}$ is isomorphic to $\rad_F(\tilde{P}_i,\tilde{P}_j)/\rad_F^2(\tilde{P}_i, 
\tilde{P}_j)\simeq e_j(\rad F)e_i/e_j(\rad^2F)e_j$, so in particular, the number $q_{ij}$ of arrows $i\to j$ in $Q_F$ 
is equal to $\dim_KF_{ji}$. 

First, we consider arrows $\alpha,\beta,\rho,\omega$ in $Q_S$ and the corresponding homomorphisms $\alpha:P_2\to 
P_1$, $\beta:P_3\to P_2$, $\rho:P_6\to P_1$ and $\omega:P_3\to P_6$ in $\proj S$. It is clear that then the induced 
morphisms of stalk complexes $\tilde{\alpha_1}:T_2\to T_1$, $\tilde{\alpha_2}:T_3\to T_2$, $\tilde{\alpha_3}:T_6\to 
T_1$ and $\tilde{\alpha_4}:T_3\to T_6$ give unique generators of spaces $F_{21}$, $F_{32}$, $F_{61}$ and $F_{36}$, so 
$q_{12}=q_{23}=q_{16}=q_{63}=1$. For instance, any nonisomorphism $\tilde{f}:T_2\to T_1$ in $\mathcal{K}_S$ is given 
by an element $f$ in $e_1\rad S e_2$, and if $\tilde{f}$ factors through $\add (T)$, then it factors through 
$\add(T_2\oplus T_6)$, because every path in $Q_S$ from $1$ to $2$ which is passing through vertex $3$ it must went 
through vertex $2$ or $6$. Consequently, $Q_{21}\simeq e_1(\rad S)e_2/e_1(\rad^2S)e_2$, and hence, there is exactly 
$q_{12}=1$ arrow $1\to 2$ in $Q_F$, denoted by $\alpha_1$ (the same as corresponding homomorphism $\alpha_1=\Hom_{\mathcal{K}_S}(T,\tilde{\alpha_1}):\tilde{P}_2\to\tilde{P}_1$). Similar arguments imply the remaining 
equalities and the following ones $q_{13}=q_{21}=q_{26}=q_{32}=q_{36}=q_{61}=q_{62}=0$. Note also that directly from Cartan matrix of $F$ one may read that $q_{24}=q_{25}=q_{42}=q_{45}=q_{46}=q_{52}=q_{54}=q_{56}=q_{64}=q_{65}=0$.

Further, it follows from Lemma \ref{tech:3.2}b), that the uniuqe (up to scalar) nonzero morphism $T_3\to T_4$ 
(respectively, $T_3\to T_5$) in $\mathcal{K}_S$ is induced by identity on $P_3$, which will be denoted, respectively, 
by $\tilde{\beta_4}:T_3\to T_4$ and $\tilde{\beta_2}:T_3\to T_5$. It is easy to check that those maps do not 
factorize through $\add(T)$, so we have irreducible morphisms $\beta_4:\tilde{P}_3\to\tilde{P}_4$ and $\beta_2:
\tilde{P}_3\to\tilde{P}_5$ in $\proj F$, and hence arrows $\beta_4:4\to 3$ and $\beta_2:5\to 3$ in $Q_F$ (in particular, $q_{43}=q_{53}=1$). Next, observe that any nonzero morphism $T_1\to T_4$ (respectively, $T_1\to T_5$) 
is nonisomorphism and it factorizes through $\tilde{\beta_4}$ (respectively, through $\tilde{\beta_2}$), hence 
through $T_3$, and therefore $q_{41}=q_{51}=0$.

Next, consider the following relations in $S$ 
$$\beta_1=\rho\omega-\alpha\beta,\beta_3=\beta_1-\lambda(\alpha\beta\gamma\sigma)^{m-1}\alpha\beta\in e_1Se_3$$ 
and $\gamma_1=\nu\delta,\gamma_2=\gamma\sigma+\lambda(\gamma\sigma\alpha\beta)^{m-1}\gamma\sigma\in e_3Se_1$. 
Then the corresponding homomorphisms $\beta_1,\beta_3:P_3\to P_1$ and $\gamma_1,\gamma_2:P_1\to P_3$ in $\proj S$ induce morphisms $\tilde{\beta_1}:T_5\to T_1$, $\tilde{\beta_3}:T_4\to T_1$ and $\tilde{\gamma_1}, 
\tilde{\gamma_2}:T_1\to T_3$ in $\mathcal{K}_S$. We claim that induced homomorphisms $\beta_1:\tilde{P}_5\to 
\tilde{P}_1, \beta_3:\tilde{P}_4\to\tilde{P}_1$ and $\gamma_1,\gamma_2:\tilde{P}_1\to\tilde{P}_3$ are irreducible 
in $\proj F$ and $q_{14}=q_{15}=1$ and $q_{31}=2$. 

Note that $c^F_{45}=c^F_{54}=0$, so directly from Lemma \ref{tech:3.2}e) we can deduce that there is a nonzero 
morphism $T_4\to T_i$ (respectively, $T_5\to T_i$) if and only if $i=3$ or $1$, and moreover, it is induced (up to 
scalar) respectively, by $X_3^m$, if $i=3$ and by $\tilde{\beta_3}$ (respectively, $\tilde{\beta_1}$), for $i=1$. In 
particular, it is easy to see that $\beta_1$ and $\beta_3$ are irreducible in $\proj F$, so $q_{14}=q_{15}=1$. 
Moreover, one may check that $\gamma_1\beta_3=-\lambda X_3^m$, hence morphisms $T_4\to T_3$ ($T_5\to T_3$) factors 
through $\tilde{\beta_3}$ ($\tilde{\beta_1}$), and therefore $q_{34}=q_{35}=0$. 

Now, observe that each linear generator $X^k_{31}$ of $e_3Se_1$, $k\in\{1,\dots,m-1\}$, is a path passing through 
vertex $2$ (or $6$), so the induced morphism $T_1\to T_3$ in $\mathcal{K}_S$ factors through $T_2$, and hence 
$q_{13}\leqslant 2$. It is not hard to prove that both $\widetilde{\gamma_1}$ and $\widetilde{\gamma_2}$ (vieved as 
morphisms $T_1\to T_3$ in $\mathcal{K}_S$) do not factorize through $\add(T)$ (it is enough to see that $\nu\delta,
\gamma\sigma$ are paths of lenght 2 in $KQ_S$, while morphisms $T_1\to T_3$ in $\mathcal{K}_S$ which factorize 
through a summand $T_i$ of $T$ are induced by combinations of paths of length at least 3). Consequently, 
corresponding homomorphisms $\gamma_1$ and $\gamma_2$ are irreducible in $\proj F$. Clearly, $\gamma_1,\gamma_2$ are 
linearly independent as elements in $e_3S e_1$, hence also as morphisms $T_1\to T_3$ in $\mathcal{K}_S$ (note that 
here $\Hom_{\mathcal{K}_S}(T_1,T_3)=\Hom_{\mathcal{C}_S}(T_1,T_3)=\Hom_S(P_1,P_3)$). As a result, we obtain that 
indeed $q_{31}=2$, and irreducible homomorphisms $\gamma_1,\gamma_2$ in $\proj F$ induce two arrows $\gamma_1,
\gamma_2:3\to 1$ in $Q_F$. 

To end, let us also mention that there are no loops in $Q_F$. Indeed, since there is no loop in any vertex 
$i\in\{1,2,3,6\}$ and every cycle $X_i^k$ in $i$ passes through $3$, we deduce that $q_{ii}=0$, for $i\in
\{1,2,3,6\}$. Moreover, easy calculation involving bases  of $e_4Se_4$ and $e_3Se_3$ shows that 
$\Hom_{\mathcal{C}_S}(T_4,T_4)$ admits a $K$-basis formed by chain maps $f^k=(f^k_1,f^k_0)$, for 
$k\in\{0,1,\dots,m-1\}$, $f_1$ and $f_0$, where $f^k$ is given by $f^k_1=X_4^k:P_4\to P_4$ and $f^k_0=X_3^k:P_3\to 
P_3$, while $f_1=(X_4^m,0)$ and $f_0=(0,X_3^m)$. One can directly verify that the following homotopies hold: $f_0\simeq f_1$ and $f^k\simeq 0$, for any $k\in\{1,\dots,m-1\}$ (see Lemma \ref{tech:3.2}d)). It follows that 
$\Hom_{\mathcal{K}_S}(T_4,T_4)$ has a basis given by two homotopy classes, one of $f_0$ and one of $f^0$. In 
particular, $f_0$ yields unique generator of $\rad_F(\tilde{P}_3,\tilde{P}_3)$ (over $K$), which factors through 
$T_3$ (even, through $\beta_4$), and hence $q_{44}=0$. Similar arguments show that $q_{55}=0$. This 
completes the proof. \end{proof} 

We get analogous result for the remaining algebra.

\begin{lem} The Gabriel quivers of $E$ and $E(m,\lambda)$ coincide. \end{lem}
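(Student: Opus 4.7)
The approach mirrors the proof of the previous lemma, adjusted for $T = T^{(1)}$, in which the only non-stalk summand is $T_5 = \bar{\textbf{P}}_5$ (now $T_4 = \textbf{P}_4$ is a stalk). Writing $c_{ij}$ for the entries of the Cartan matrix $C_E$ computed above and $q_{ij}$ for the number of arrows $i \to j$ in the Gabriel quiver of $E$, the plan is to exhibit, for each arrow of $Q_E$, an irreducible morphism between summands of $T^{(1)}$ in $\mathcal{K}_S$, and to verify that no further arrows occur.

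For the six arrows $\alpha_1, \ldots, \alpha_6$ of $Q_E$, which connect vertices in $\{1,2,3,4,6\}$ only, all relevant $T_i$ are stalks. The corresponding irreducible morphisms come from the arrows $\alpha, \beta, \gamma, \sigma, \rho, \omega$ of $Q_S$, and non-factorization through $\add(T^{(1)})$ is verified as in the previous proof, noting that any path of length $\geq 2$ between these vertices in $Q_S$ either passes through another vertex of $\{1,2,3,4,6\}$ or through vertex $5$ (to be treated separately via $T_5$). For the two arrows involving vertex $5$, I would apply Lemma \ref{tech:3.2}(e) and \ref{tech:3.2}(b): the former shows that $\beta_1 = \rho\omega - \alpha\beta \in e_1 S e_3$ gives (up to scalar) the unique non-null-homotopic chain map $T_5 \to T_1$; the latter shows that the identity on $P_3$ yields the unique non-null-homotopic generator $\beta_2$ of $\Hom_{\mathcal{K}_S}(T_3, T_5)$, since the basis elements $X_3^k$ with $k \geq 1$ all lie in $\nu S$ thanks to relation (2), which gives $\nu\delta\alpha\beta = X_3 + \lambda X_3^m$ and hence $X_3 = \nu(\delta\alpha\beta) - \lambda\nu(\delta\alpha\beta X_3^{m-1})$. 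The vanishings $q_{5j} = q_{j5} = 0$ for $j \in \{2,4,6\}$ are forced by $c_{5j} = c_{j5} = 0$, and the entries $c_{45} = c_{54} = 1$ are accounted for by the factorizations $\beta_2 \circ \gamma$ and $\sigma \circ \beta_1$ through $T_3$ and $T_1$, so no arrows link $5$ with $4$.

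The main obstacle is the single arrow $\gamma_0: 3 \to 1$, corresponding to a morphism $T_1 \to T_3$. The space $e_3 S e_1$ has dimension $c_{13} = m+1$, with basis $\{X_{31}^r : 0 \leq r \leq m-1\} \cup \{\tilde{X}_{31} = \nu\delta\}$. In sharp contrast to the $F$-case (where two arrows appear), here only one arrow survives because $T_4$ is now a stalk: $\gamma\sigma = X_{31}^0$ factors as the genuine composition $\gamma \circ \sigma$ through $T_4$, placing it (and the higher powers $X_{31}^r$) in $\rad^2 E$. It then remains to show that $\nu\delta$ is truly irreducible. It plainly does not factor through the stalks $T_2, T_4, T_6$ (as a length-$2$ path through vertex $5$), and the critical check is non-factorization through $T_5$: by Lemma \ref{tech:3.2}(e) applied with $i = 3$, every chain map $T_5 \to T_3$ is a scalar multiple of the one induced by $X_3^m \in e_3 S e_3$, and $X_3^m \cdot e_3 S e_1 = 0$ by the relations of Section \ref{sec:2}, so every composition $T_1 \to T_5 \to T_3$ vanishes. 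Non-factorization through endomorphisms of $T_1$ or $T_3$ follows from the observation that $\tilde{X}_{31} = \nu\delta$ is a basis element linearly independent of $X_3 \cdot e_3 S e_1$ and $e_3 S e_1 \cdot X_1$. Hence $q_{13} = 1$ with $\gamma_0 \leftrightarrow \nu\delta$.

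Loops are finally ruled out as in the previous lemma: for $i \in \{1,2,3,4,6\}$, the generators of $e_i S e_i$ are powers of the cycle $X_i$, each of which factors through another stalk $T_j$; for $i = 5$, a direct computation of $\Hom_{\mathcal{K}_S}(T_5, T_5)$ via Lemma \ref{tech:3.2}(d) shows it is $2$-dimensional, with the non-identity generator factoring through $T_3$ via $\beta_2$. This matches $Q_E$ with the Gabriel quiver of $E$ and completes the plan.
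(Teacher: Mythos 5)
Your proposal is correct and follows essentially the same route as the paper: the arrows of $Q_S$ give the irreducible morphisms $\alpha_1,\dots,\alpha_6$ between stalk summands, $\beta_1$ and $\beta_2$ come from $\rho\omega-\alpha\beta$ and the identity on $P_3$ via Lemma \ref{tech:3.2}, and the key point --- that $\gamma\sigma$ (and its higher powers) now factors through the stalk $T_4$, leaving only $\nu\delta$ as an irreducible morphism $T_1\to T_3$ --- is exactly the paper's argument for $q_{13}=1$. The only blemish is the sentence claiming $c_{5j}=c_{j5}=0$ for $j\in\{2,4,6\}$, which should exclude $j=4$ (indeed $c_{45}=c_{54}=1$), but you immediately handle that case correctly via the factorizations through $T_3$ and $T_1$.
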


\begin{proof} As before, we abbreviate $T=T^{(1)}$, and for any $i\in\{1,\dots,6\}$, by $\tilde{P}_i$ we denote the 
projective module $\Hom_{\mathcal{K}_S}(T,T_i)$ in $\ind E$ corresponding to $i$-th summand $T_i$ of $T$ (with order 
of summands as presented in previous section). First, let us consider the following homomorphisms in 
$\proj S$
$$\alpha_1=\alpha:P_2\to P_1,$$ 
$$\alpha_2=\beta:P_3\to P_2,$$
$$\alpha_3=\rho:P_6\to P_1,$$ 
$$\alpha_4=\omega:P_3\to P_6,$$
$$\alpha_5=\gamma:P_4\to P_3 \quad\mbox{and}\quad\alpha_6=\sigma:P_1\to P_4 $$ 
induced by arrows in $\alpha,\beta,\rho,\omega,\gamma$ and $\sigma$ in $Q_S$. It is clear from the last proof that for any $i\in\{1,\dots,6\}$ 
homomorphism $\alpha_i:P_k\to P_l$ induce morphism $\tilde{\alpha_i}:T_k\to T_l$ which does not factor through $\add(T)$, and hence homomorphism 
$\alpha_i=\Hom_{\mathcal{K}_S}(T,\tilde{\alpha_i}):\tilde{P}_k\to\tilde{P}_l$ is irreducible in $\proj E$.

Moreover, let $\gamma_0:P_1\to P_3$ be a homomorphism in $\proj S$ given by path $\gamma_1=\nu\delta$ in the notation from the proof of previous Lemma. Using arguments presented there, one may easily see that $\gamma_0$ induces unique 
(up to scalar) morphism $\tilde{\gamma_0}:T_1\to T_3$ which does not factor through $\add(T)$ ($\gamma_2$ gives 
morphism which factors through $T_4=\textbf{P}_4$). In consequence, corresponding homomorphism $\gamma_0=
\Hom_{\mathcal{K}_S}(T,\tilde{\gamma_0}):\tilde{P}_1\to\tilde{P}_3$ is irreducible in $\proj E$, and hence, we have 
exactly one arrow $1\to 3$ in $Q_E$, denoted also by $\gamma_0$. As in the previous proof we can now deduce that 
$\alpha_1,\dots,\alpha_6$ and $\gamma_0$ are all arrows in $Q_E$ between vertices $i,j\neq 5$. 

Finally, we can repeat arguments used in the last proof and conclude that the remaining arrows $\beta_1:1\to 5$ 
and $\beta_2:5\to 3$ in $Q_E$ are induced from morphisms $\tilde{\beta_1}:T_5\to T_1$ and $\tilde{\beta_2}:T_3\to 
T_5$ given, respectively, by $\beta_1=\rho\omega-\alpha\beta$ and $\beta_2$ the identity on $P_3$. Now, the 
required claim follows.\end{proof}

Now, let us present two lemmas describing the relations in the both considered algebras. 

\begin{lem} The following relations hold in $F$
\begin{enumerate}
\item[(1)] $\alpha_1\alpha_2-\alpha_3\alpha_4+\beta_1\beta_2=0$ and $\alpha_1\alpha_2-\alpha_3\alpha_4+\beta_3\beta_4+
\lambda(\alpha_3\alpha_4\gamma_1)^{m-1}\alpha_3\alpha_4=0$.

\item[(2)] $\alpha_2\gamma_1=\alpha_2\gamma_2$ and $\gamma_2\alpha_1=\gamma_1\alpha_1$.

\item[(3)] $\alpha_4\gamma_2=\alpha_4\gamma_1+\lambda(\alpha_4\gamma_1\alpha_3)^{m-1}\alpha_4\gamma_1$ and $\gamma_2\alpha_3=\gamma_1\alpha_3+
\lambda(\gamma_1\alpha_3\alpha_4)^{m-1}\gamma_1\alpha_3$.
 
\item[(4)] $\beta_2\gamma_1=\beta_4\gamma_2=\gamma_1\beta_1=\gamma_2\beta_3=0$.

\item[(5)] $\beta_2\gamma_2\beta_1\beta_2=\beta_4\gamma_1\beta_3\beta_4=0$.

\item[(6)] $\gamma_1\alpha_1\alpha_2=\gamma_1\alpha_3\alpha_4$ and $\alpha_1\alpha_2\gamma_1=\alpha_3\alpha_4\gamma_1$.

\item[(7)] $(\alpha_3\alpha_4\gamma_1)^m\alpha_1=(\alpha_3\alpha_4\gamma_1)^m\alpha_3=\gamma_1(\alpha_3\alpha_4\gamma_1)^m=0$ and 
$\alpha_2(\gamma_1\alpha_3\alpha_4)^m=\alpha_4(\gamma_1\alpha_3\alpha_4)^m=0$.

\end{enumerate}\end{lem}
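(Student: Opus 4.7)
The plan is to verify the seven relations (F1)--(F7) one by one by translating each left-hand side into a chain map in $\mathcal{C}_S$, computing the composition, and either reducing it to a known identity in $S$ or checking it is null-homotopic. The previous lemma has identified each arrow of $Q_F$ with a specific element of the path algebra (for the $\alpha$-arrows and for $\beta_1,\beta_3,\gamma_1,\gamma_2$) or with the identity-in-degree-$0$ chain map $\textbf{P}_3\to\bar{\textbf{P}}_5$, $\textbf{P}_3\to\bar{\textbf{P}}_4$ (for $\beta_2$ and $\beta_4$). Composition of stalk-to-stalk morphisms is therefore just multiplication in $S$, while compositions touching $\bar{\textbf{P}}_4$ or $\bar{\textbf{P}}_5$ are read off from the degree-$0$ component, with homotopy governed by Lemma~\ref{tech:3.2}.

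For the non-vanishing relations (F1), (F2), (F3), (F6), the identities unwind to identities in $S$ that are consequences of the defining relations (1)--(10). The first half of (F1) reduces to the trivial $\alpha\beta-\rho\omega+(\rho\omega-\alpha\beta)=0$; the two identities in (F2) are (1) and (2) of $S$ after the rebracketing $\beta(\gamma\sigma\alpha\beta)^{m-1}\gamma\sigma=(\beta\gamma\sigma\alpha)^{m-1}\beta\gamma\sigma$; (F3) rests on (7)--(8) together with analogous manipulations; and (F6) is (5)--(6). The second half of (F1) and the $\lambda$-corrections in (F3) require the additional step of comparing powers of the two length-$4$ cycles at a given vertex, for which I will invoke the relations (5') in $S$ and the remark in Section~\ref{sec:2} that $X_i^k p_1=X_i^k p_2$ for any two paths $p_1,p_2$ from $i$ to $j$, and that $X_i-X'_i$ belongs to $K X_i^m$.

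The vanishing relations (F4), (F5), (F7) split into two cases. When the composition ends at a stalk complex $\textbf{P}_i$ with $i\in\{1,2,3,6\}$, vanishing in $F$ coincides with vanishing of the corresponding element of $S$, which follows from the zero relations (9), (10) and (1')--(5'). When the composition ends at $\bar{\textbf{P}}_4$ or $\bar{\textbf{P}}_5$, I apply Lemma~\ref{tech:3.2}(b): the map is null-homotopic precisely when the resulting element of $e_3 S e_i$ lies in $\gamma S$ or in $\nu S$, respectively. For example $\beta_2\gamma_1$ corresponds to $\nu\delta=\nu\cdot\delta\in\nu S$, hence $\sim 0$. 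When the composition originates at $\bar{\textbf{P}}_4$ or $\bar{\textbf{P}}_5$, as in $\gamma_1\beta_1$ and the length-four strings in (F5), I first confirm the chain-map condition $f_0\nu=0$ (or $f_0\gamma=0$) using relations (5)--(6) of $S$ and then exhibit $f_0$ in the form $\nu h$ or $\gamma h$ with $h\in e_5 S e_3$ or $h\in e_4 S e_3$.

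The main obstacle is the bookkeeping around the two length-$4$ cycles at each vertex. The twisted identities in (F1) and (F3) match up only after carefully commuting a cycle past a length-$2$ path via the remark in Section~\ref{sec:2} and absorbing the resulting $K X^m$ discrepancy using the zero relations (9), (10), (2'), (4'). Once this comparison has been executed in one case, the remaining verifications reduce to routine pattern-matching with the relations of $S(m,\lambda)$.
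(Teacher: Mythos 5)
Your plan coincides with the paper's proof: both identify the arrows of $Q_F$ with the specific elements of $S$ (respectively the identity-induced chain maps into $\bar{\textbf{P}}_4$, $\bar{\textbf{P}}_5$), reduce (F1), (F2), (F3), (F6) to the defining relations (1)--(8) of $S$ after the same rebracketing of the $\lambda$-correction terms, and handle the vanishing relations (F4), (F5), (F7) via the zero relations (9), (10), (1$'$)--(5$'$) together with Lemma~\ref{tech:3.2}(b) and the cycle-comparison remark of Section~\ref{sec:2}. No substantive difference in method.
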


\begin{proof} The relations in (1) follow directly from definition of irreducible homomorphisms corresponding to 
arrows in $Q_F$. Easy calculation shows that $\alpha_2\gamma_1-\alpha_2\gamma_2=\beta\nu\delta-\beta\gamma\sigma-
\lambda(\beta\gamma\sigma\alpha)^{m-1}\beta\gamma\sigma=0$ and $\gamma_1\alpha_1-\gamma_2\alpha=\nu\delta\alpha-
\gamma\sigma\alpha-\lambda(\gamma\sigma\alpha\beta)^{m-1}\gamma\sigma\alpha=0$, by Section \ref{sec:2} (1)-(2). 
Therefore (2) holds. Further 
$$\alpha_4\gamma_2-\alpha_4\gamma_1-\lambda(\alpha_4\gamma_1\alpha_3)^{m-1}\alpha_4\gamma_1=
\omega\gamma\sigma-\omega\nu\delta+\lambda\omega(\gamma\sigma\alpha\beta)^{m-1}\gamma\sigma-$$
$$-\lambda(\omega\nu\delta\rho)^{m-1}\omega\nu\delta=0,\qquad\qquad\qquad\qquad\qquad\qquad$$ 
because in S, we have $\omega(\gamma\sigma\alpha\beta)^{m-1}\gamma\sigma=\omega(\nu\delta\rho\omega)^{m-1}\gamma\sigma=(\omega\nu\delta\rho)^{m-1} \omega\gamma\sigma$ and $\omega\gamma\sigma=\omega\nu\delta$, by Section 
\ref{sec:2} (7). In a similar way one may chceck that the second relation in (3) follows from (8) in Section 
\ref{sec:2}.

For (4) observe that since $\beta_2$ and $\beta_4$ are induced by indentity on $P_3$, we may identify relations $r_1=\beta_2\gamma_1$ and $r_2=\beta_4\gamma_2$ in $E$ with relations given by $\gamma_1=\nu\delta\in \nu S$ and 
$\gamma_2=\gamma(\sigma+\lambda(\sigma\alpha\beta\gamma)^{m-1}\sigma) \in\gamma S$, respectively. As a result 
corresponding morphisms $\tilde{r_1}:T_1\to T_5$ and $\tilde{r_2}:T_1\to T_4$ are homotopic to $0$, by Lemma 
\ref{tech:3.2}b). Note also that $\gamma_1\beta_1=\nu\delta(\rho\omega-\alpha\beta)=\nu(\delta\rho\omega-\delta\alpha\beta)=0$, due to Section \ref{sec:2} (6) and $\gamma_2\beta_1=(\gamma\sigma+\lambda X^{m-1}_{31})(\rho\omega-
\alpha\beta)=\gamma(\sigma\rho\omega-\sigma\alpha\beta)+\lambda X_3^{m-1}\gamma(\sigma\rho\omega-\sigma\alpha\beta)=
\lambda X_3^m+\lambda^2 X_3^{2m-1}=\lambda X_3^m$, by Section \ref{sec:2} (3), (10) and remarks. Now it is clear that 
$$\gamma_2\beta_3=\gamma_2(\beta_1-\lambda X_{13}^{m-1})=\lambda X_3^m - \lambda(\gamma\sigma+\lambda X^{m-1}_{31})X_{13}^{m-1}$$
is equal to $0$ in $F$, because $\gamma\sigma X_{13}^{m-1}=\gamma\sigma X_1^{m-1}\alpha\beta=X_3^m$, while $X^{m-1}_{31}X_{13}^{m-1}=X_3^{m-1}\gamma\sigma X_1^{m-1}\alpha\beta=X_3^{2m-1}=0$, again by Section \ref{sec:2} (10) (note: 
$m\geqslant 2$ implies $2m-1>m$). This proves that all relations in (4) hold. 

Moreover, relation $\beta_2\gamma_2\beta_1$ (respectively, $\beta_4\gamma_1\beta_3$) in $F$ corresponds to a 
morphism $T_5\to T_5$ (respectively, $T_4\to T_4$), which is given by single homomorphism $P_3\to P_3$ identified, 
respectively, with $\gamma_2\beta_1=(\gamma\sigma+\lambda(\gamma\sigma\alpha\beta)^{m-1}\gamma\sigma)(\rho\omega-
\alpha\beta)=\lambda X_3^m$ (by Section \ref{sec:2} (3)) and with $\gamma_1\beta_3=-\lambda\nu\delta
(\alpha\beta\gamma\sigma)^{m-1}\alpha\beta=-\lambda X_3^m$ (by (4) above; see also remarks in Section \ref{sec:2}). 
Therefore, using Lemma \ref{tech:3.2}b) we deduce that both relations $r_1=\beta_2\gamma_2\beta_1\beta_2$ and 
$r_2=\beta_4\gamma_1\beta_3\beta_4$ hold in $F$, since corresponding morphims $\tilde{r_1}:T_3\to T_5$ and 
$\tilde{r_2}:T_3\to T_4$ are induced by a cycle in $e_3Se_3$ which factorizes both through $\gamma$ and $\nu$, 
respectively. This implies (5). 

Finally, we will provide arguments for relations in (6) and (7). First, as an immediate consequence of (5)-(6) in 
Section \ref{sec:2}, we obtain that $\alpha_1\alpha_2\gamma_1-\alpha_3\alpha_4\gamma_1=(\alpha\beta-\rho\omega)
\nu\delta=0$ and $\gamma_1\alpha_1\alpha_2-\gamma_1\alpha_3\alpha_4=\nu\delta(\alpha\beta-\rho\omega)=0$, and 
hence (6) follows. Now, recall that the cycle $\alpha_3\alpha_4\gamma_1$ (respectively, $\gamma_1\alpha_3\alpha_4$) 
in $F$ corresponds to the cycle $\tilde{X}_1$ (respectively, $\tilde{X}_3$) in $S$ (in the notation from Section 2), so applying Section \ref{sec:2} (5$'$), we conclude that $(\alpha_3\alpha_4\gamma_1)^k=X_1^k$ and 
$(\gamma_1\alpha_3\alpha_4)^k=X_3^k$, for all $k\in\{2,\dots,m\}$. In particular, then relations 
$(\alpha_3\alpha_4\gamma_1)^m\alpha_1=(\alpha_3\alpha_4\gamma_1)^m\alpha_3=\gamma_1(\alpha_3\alpha_4\gamma_1)^m=0$ 
in $F$ follow from relations (9), (1$'$) and (4$'$) in Section \ref{sec:2}. The remaining relations from (7) are 
easy corollaries from Section \ref{sec:2} (2$'$) and the fact that $\beta X_3^m=0$ in $S$. The proof is now 
finished. \end{proof}

\begin{lem} The following relations hold in $E$
\begin{enumerate}
\item[(8)] $\alpha_1\alpha_2-\alpha_3\alpha_4+\beta_1\beta_2$,

\item[(9)] $\alpha_3\alpha_4\alpha_5=\alpha_1\alpha_2\alpha_5+\lambda(\alpha_1\alpha_2\alpha_5\alpha_6)^{m-1}\alpha_1\alpha_2\alpha_5$, 

\item[(10)] $\gamma_0\beta_1=0$, $\gamma_0\alpha_3=\alpha_5\alpha_6\alpha_3$ and $\gamma_0\alpha_1=\alpha_5\alpha_6\alpha_1+
\lambda(\alpha_5\alpha_6\alpha_1\alpha_2)^{m-1}\alpha_5\alpha_6\alpha_1$,

\item[(11)] $\alpha_2\gamma_0=\alpha_2\alpha_5\alpha_6+\lambda(\alpha_2\alpha_5\alpha_6\alpha_1)^{m-1}\alpha_2\alpha_5\alpha_6$ and 
$$(\alpha_2\alpha_5\alpha_6\alpha_1)^{m-1}\alpha_2\alpha_5\alpha_6\alpha_3=0,$$

\item[(12)] $\alpha_4\gamma_0=\alpha_4\alpha_5\alpha_6$ and $\alpha_4\alpha_5\alpha_6\alpha_1(\alpha_2\alpha_5\alpha_6\alpha_1)^{m-1}=0$,

\item[(13)] $\alpha_6\alpha_3\alpha_4=\alpha_6\alpha_1\alpha_2+\lambda(\alpha_6\alpha_1\alpha_2\alpha_5)^{m-1}\alpha_6\alpha_1\alpha_2$,

\item[(14)] $\beta_2\gamma_0=0$, $\beta_2\alpha_5\alpha_6\alpha_1=\beta_2\alpha_5\alpha_6\alpha_3=0$ and 
$\beta_2\alpha_5\alpha_6\beta_1\beta_2=0$,

\item[(15)] if a cycle $C$ at vertex $i$ in $Q_E$ is of length $3$ or $4$, then $\theta C^m=C^m\phi=0$, for 
all arrows $\theta,\phi$ with $t(\theta)=s(\phi)=i$.\end{enumerate}\end{lem}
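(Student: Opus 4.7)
The plan is to proceed relation by relation, translating each defining relation of $E(m,\lambda)$ into a statement about morphisms in $\mathcal{K}_S$ and then reducing it either to a defining relation of $S(m,\lambda)$ or to a null-homotopy verified via Lemma~\ref{tech:3.2}. I first fix the dictionary between arrows of $Q_E$ and morphisms in $\mathcal{K}_S$ established in the previous Gabriel quiver lemma: $\alpha_1,\dots,\alpha_6$ are induced by $\alpha,\beta,\rho,\omega,\gamma,\sigma$ in $S$; $\gamma_0$ is induced by $\nu\delta\in e_3Se_1$; $\beta_1:T_5\to T_1$ is the chain map with degree-$0$ component $\rho\omega-\alpha\beta:P_3\to P_1$ (a chain map thanks to relation~(5) of $S$); and $\beta_2:T_3\to T_5$ is the identity on $P_3$ in degree $0$.

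For (E1)--(E6) together with the relation $\gamma_0\beta_1=0$ from (E3), all compositions arising in $\mathcal{K}_S$ are chain maps between stalk complexes (the segment $\beta_1\beta_2$ flattens to the single degree-$0$ map $\rho\omega-\alpha\beta:P_3\to P_1$), so each relation reduces to an identity in $S$. These I would verify against the relations (1)--(10) and their consequences (1')--(5') of Section~\ref{sec:2}: (E1) is the identity $\alpha\beta-\rho\omega+(\rho\omega-\alpha\beta)=0$; (E2) is (4); the last two parts of (E3) are (8) and (2); (E4) combines (1) with (1'); (E5) combines (7) with (2'); (E6) is (3); and $\gamma_0\beta_1=0$ is $\nu(\delta\rho\omega-\delta\alpha\beta)=0$ by (6).

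The remaining relations in (E7) are chain maps $T_i\to T_5$ in $\mathcal{K}_S$ whose degree-$0$ components are explicit elements of $e_3Se_i$, and by Lemma~\ref{tech:3.2}(b) vanishing in $E$ is equivalent to these elements lying in $\nu S$. This is trivial for $\beta_2\gamma_0$; for $\beta_2\alpha_5\alpha_6\alpha_j$ with $j=1,3$ it follows from (2) and (8) of $S$, provided one checks that the associated correction $X_3^{m-1}\gamma\sigma\alpha_j$ belongs to $\nu S$ (using $X_3^m\gamma=0$ from the Remark and (5')); and for $\beta_2\alpha_5\alpha_6\beta_1\beta_2$ the interior composite $\alpha_5\alpha_6\beta_1$ corresponds, by (8) and $X'_3-X_3\in KX_3^m$, to the scalar multiple $\lambda X_3^m:P_3\to P_3$, which lies in $\nu S$ because $X_3^m=(\nu\delta\rho\omega)^m$ by (5'). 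Relation (E8) is then handled by enumerating the cycles of length $3$ or $4$ at each vertex of $Q_E$: under the dictionary every such cycle corresponds to $X_i$ or $X'_i$ at the given vertex (possibly after substituting $\gamma_0=\nu\delta$ for a length-$2$ subpath), so $C^m=X_i^m$ by (5'), and $\theta X_i^m=X_i^m\phi=0$ from the Remark closes the case, with Lemma~\ref{tech:3.2}(b) invoked for those cycles ending at $T_5$ (since $X_i^m\in\nu S$).

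The main obstacle will be the bookkeeping of the correction terms $\lambda X_i^m$ introduced every time (5') is used to swap between the two length-$4$ cycles at a vertex of $Q_S$: these are precisely what produce the $\lambda$-deformation summands on the right-hand sides of (E2)--(E6), and one must verify that no stray double-deformation terms (such as $\lambda^2X_i^{2m-1}$) survive. They do not, because $2m-1\geqslant m+1$ and $X_i^m$ annihilates every arrow meeting vertex $i$, but keeping this calculation clean across the many sub-relations of (E7) and (E8) is the most delicate part.
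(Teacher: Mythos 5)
Your proposal is correct and follows essentially the same route as the paper: translate each relation of $Q_E$ through the dictionary $\alpha_1,\dots,\alpha_6,\gamma_0,\beta_1,\beta_2\mapsto\alpha,\beta,\rho,\omega,\gamma,\sigma,\nu\delta,\rho\omega-\alpha\beta,\mathrm{id}_{P_3}$, match (E1)--(E6) against relations (1)--(8) and (1$'$)--(2$'$) of $S(m,\lambda)$ exactly as the paper does, and dispose of (E7) and (E8) via Lemma \ref{tech:3.2}(b) together with (5$'$) and the remarks of Section \ref{sec:2}. Your identification of which $S$-relation yields which $E$-relation agrees item by item with the paper's, and your explicit attention to the correction terms $\lambda X_i^m$ (and to the membership $X_3^{m-1}\gamma\sigma\alpha\in\nu S$, which needs the path-rewriting remark when $m=2$) only makes the argument more complete than the published one.
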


\begin{proof} (8) is obvious from definition of irreducible homomorphisms corresponding to arrows of $Q_E$. For 
(9) it suffices to use Section \ref{sec:2} (4), whereas (10) is an immediate consequence of (6), (8) and (2) in 
Section \ref{sec:2}, respectively. Further (11) follows from Section \ref{sec:2} (1) and (1$'$). To see (12) it 
is enough to apply Section \ref{sec:2} (7) and (2'). Now, Section \ref{sec:2} (3) implies (13), while (14) is easily 
obtained from Lemma \ref{tech:3.2}b), and (8), (2) and (3) in Section \ref{sec:2}. Finally, to prove (15) we need 
only to observe that any cycle in $Q_E$ of length $3$ or $4$ is given by a single cycle in $Q_S$ of length $4$ or 
$4m$. Then this condition holds by (14) above and remarks in Section \ref{sec:2} (see also previous proof).
\end{proof}

Now, we may finish the proof by showing appropriate bases of algebras $E(m,\lambda)$ and $F(m,\lambda)$.

\begin{lem} $C_{F(m,\lambda)}=C_{F}$ and $\dim_K F(m,\lambda)=16m+16$
\end{lem}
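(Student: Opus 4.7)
The plan is to use the previous two lemmas to produce a canonical surjective $K$-algebra homomorphism $\pi\colon F(m,\lambda)\twoheadrightarrow F$ and then show that $\dim_K F(m,\lambda)\leqslant 16m+16=\sum_{i,j}c^F_{ij}$; combined with surjectivity of $\pi$, this forces equality of all individual dimensions $\dim_K e_jF(m,\lambda)e_i=c^F_{ij}$, and hence both $C_{F(m,\lambda)}=C_F$ and $\dim_K F(m,\lambda)=16m+16$. The surjection exists because the idempotents $e_i$ together with the irreducible morphisms $\alpha_1,\dots,\alpha_4,\beta_1,\dots,\beta_4,\gamma_1,\gamma_2$ generate $F$ (by the Gabriel-quiver lemma) and satisfy every relation (F1)--(F7) (by the previous lemma), so sending the generators of $F(m,\lambda)$ to their namesakes yields a well-defined algebra map which is surjective on radicals and on idempotents.

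For the dimension bound, for each pair $(i,j)\in\{1,\dots,6\}^2$ I would write down an explicit set $B_{ij}\subset e_jF(m,\lambda)e_i$ of cardinality $c^F_{ij}$ and prove by a direct path-reduction argument that every element of $e_jF(m,\lambda)e_i$ lies in its $K$-span. The natural candidates are built from the fundamental cycle $C=\alpha_3\alpha_4\gamma_1$ at vertex $1$ together with its conjugates at the remaining vertices. For example, at $(1,1)$ I would take $B_{11}=\{e_1,C,C^2,\dots,C^m\}$: relations (F1) and (F6) show that every cycle at $1$ lies in $K\langle C\rangle$, while (F7) provides the truncation $C^{m+1}=0$. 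Analogous cyclic bases of size $m+1$ work at $(2,2),(3,3),(6,6)$, whereas $(4,4)$ and $(5,5)$ admit bases $\{e_4,\beta_4\gamma_1\beta_3\}$ and $\{e_5,\beta_2\gamma_2\beta_1\}$ of size $2$ only, since (F1), (F5) and (F7) together force the squares of these elements to vanish. The off-diagonal spaces $e_jF(m,\lambda)e_i$ are treated similarly by prepending or appending a single non-cyclic path to $\{C^k\}$; the one subtlety occurs at the pairs $(1,3)$ and $(3,1)$, where (F2)--(F3) show that $\gamma_1$ and $\gamma_2$ differ only by an $(\alpha_3\alpha_4\gamma_1)^{m-1}$-type correction after composition on the appropriate side, so that $\{\gamma_1,\gamma_1 C,\dots,\gamma_1 C^{m-1},\gamma_2\}$ (and its opposite) furnishes the required $(m+1)$-element basis.

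The main obstacle will be the bookkeeping involved in systematically reducing paths of large combinatorial length modulo (F1)--(F7): relation (F1) allows $\alpha_1\alpha_2$, $\alpha_3\alpha_4$, $\beta_1\beta_2$, $\beta_3\beta_4$ to be interchanged only up to $(\alpha_3\alpha_4\gamma_1)^{m-1}$-corrections, so repeated substitutions have to be tracked carefully through every new composition and one must check that no hidden independent elements arise beyond those listed in $B_{ij}$. The surjection $\pi$ yields a clean shortcut for the final linear-independence step, since $\pi(B_{ij})$ already spans $e_jFe_i$ with exactly $c^F_{ij}$ elements and is therefore automatically a basis there; this forces $B_{ij}$ itself to be linearly independent in $F(m,\lambda)$. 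Summing $|B_{ij}|=c^F_{ij}$ over all pairs $(i,j)$ then delivers $\dim_K F(m,\lambda)=16m+16$ and $C_{F(m,\lambda)}=C_F$, proving the lemma and simultaneously showing that $\pi$ is an isomorphism of $K$-algebras.
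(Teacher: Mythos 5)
Your proposal is correct and follows essentially the same route as the paper: the paper likewise derives auxiliary consequences of (F1)--(F7) and then writes down explicit spanning sets for the indecomposable projective $F(m,\lambda)$-modules, built from the fundamental cycles $\alpha_3\alpha_4\gamma_1$, $\gamma_1\alpha_3\alpha_4$, etc., whose cardinalities match the rows of $C_F$, with linear independence supplied by the surjection onto $F=\End_{\mathcal{K}_S}(T^{(2)})$ coming from the preceding two lemmas. The only difference is organizational (you index spanning sets by pairs $(i,j)$ rather than by projective module), and the bookkeeping you flag as the main obstacle is exactly the content of the paper's derived relations (1$'$)--(7$'$).
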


\begin{proof} We abbreviate $F=F(m,\lambda)$ for most part. First of all, observe that relations (F1)-(F7) 
defining $F(m,\lambda)$ imply also the following relations  
\begin{enumerate}
\item[(1')] $\alpha_3\alpha_4\gamma_1\alpha_1\alpha_2=\alpha_3\alpha_4\gamma_1\alpha_3\alpha_4$ and $\alpha_3\alpha_4\gamma_1\beta_1=\alpha_3\alpha_4\gamma_1\beta_3=0$,

\item[(2$'$)] $\alpha_2\gamma_1\beta_1=\alpha_2\gamma_1\beta_3=0$ and $(\alpha_2\gamma_1\alpha_1)^{m-1}\alpha_2\gamma_1\alpha_3=0$,
  
\item[(3$'$)] $\gamma_1\alpha_3\alpha_4\gamma_1\beta_1=\gamma_1\alpha_3\alpha_4\gamma_1\beta_3=0$ and $\gamma_1\alpha_3\alpha_4\gamma_1=
\gamma_1\alpha_3\alpha_4\gamma_2$,

\item[(4$'$)] $\beta_4\gamma_1\alpha_1=\beta_4\gamma_1\alpha_3=\beta_4\gamma_1\beta_1=0$,

\item[(5$'$)] $\beta_2\gamma_2\alpha_1=\beta_2\gamma_2\alpha_3=\beta_2\gamma_2\beta_3=0$,

\item[(6$'$)] $\alpha_4\gamma_1\beta_1=\alpha_4\gamma_1\beta_3=0$ and $(\alpha_4\gamma_1\alpha_3)^{m-1}\alpha_4\gamma_1\alpha_1=0$,

\item[(7$'$)] for every vertex $i$ and any cycle $C$ of length in $Q_F$ with source and terget $i$, we have $C^m\phi=0=\theta C^m$, for all arrows $\phi,\theta$ in $Q_E$ with $t(\theta)=s(\phi)=i$.\end{enumerate}

We shall give some brief comments on the above relations. Note first that (4$'$) and (5$'$) follow from (2), (4) and second relation in (3). Moreover, first relation in (1$'$) is a direct consequence of (6), while the rest follow from 
(6), (2) and (4). From (2) and (4) we may easily deduce first two relations in (2$'$), and for the remaining one, 
observe that (2), (3) and (6) imply 
$$\lambda(\alpha_2\gamma_1\alpha_1)^{m-1}\alpha_2\gamma_1\alpha_3=\lambda \alpha_2(\gamma_1\alpha_1\alpha_2)^{m-1}\gamma_1\alpha_3=
\alpha_2(\lambda(\gamma_1\alpha_3\alpha_4)^{m-1}\gamma_1\alpha_3)=$$ 
$$ =\alpha_2(\gamma_2\alpha_3-\gamma_1\alpha_3)=(\alpha_2\gamma_2-\alpha_2\gamma_1)\alpha_3 =0,$$ 
so this also holds. In a similar way one may postmultiply the first relation from (3) by $\alpha_1$ and use (2) to 
obtain that the last relation from (6$'$) holds. Now, applying (4), we get the first relation from (6$'$) and the 
second is obtained by using (4) and the first relation in (3) twice, and then (7). Finally, first two relations in 
(3$'$) are consequences of (6$'$), and the last follows from (7), because, by (3), the difference 
$\alpha_3\alpha_4\gamma_1-\alpha_3\alpha_4\gamma_2$ is $\lambda(\alpha_3\alpha_4\gamma_1)^m$. It is enough to show 
that (7$'$) holds. Since (2) and (5) hold in $F$ we may restrict our considerations to cycles in vertex $i\in 
\{1,2,3,6\}$. On the other hand, if $C$ and $C'$ are two different cycles in $Q_F$ of length $3$ in vertex $i\in 
\{1,2,3,6\}$, then it follows from (3) and (2) that either $C=C'$ or $C-C'$ is $m$-th power of $C$ or $C'$ up to 
scalar multiplication, and consequently, we get $C^m=(C')^m$. Therefore, we only need to check condition (7$'$) for 
one choosen cycle $C$ in a given vertex $i$. If $C=\alpha_3\alpha_4\gamma_1$ is a cycle in $i=1$, then $\gamma_2C^m=
\gamma_2(\alpha_1\alpha_2\gamma_1)^m=\gamma_1C^m$, by (2), so combining this with (7) and (1$'$), we conclude that 
$C^m\phi=\theta C^m=0$, for all arrows $\phi$ (respectively, $\theta$) with source (respectively, target) in $i=1$. 
Because $\gamma_1\alpha_3\alpha_4=\gamma_2\alpha_1\alpha_2$, due to (6) and (2), one may deduce from (4), (7) and 
previous case ($i=1$), that (7$'$) is satisfied also for the cycle $C=\gamma_1\alpha_3\alpha_4$ in vertex $3$. The 
remaining cases follows from previous ones.
 
Let $\tilde{e}_1,\dots,\tilde{e}_6$ denote all primitive idempotents in $F$ corresponding to vertices of $Q_F$. Now we will explicitly describe base $\mathcal{B}_i$, for every projective module $\tilde{P}_i:=\tilde{e}_iF$ in 
$\ind F$, $i\in\{1,\dots,6\}$. 

First observe that all paths in $Q_F$ of length $1$ starting in vertex $1$ are precisely the arrows $\alpha_1,\alpha_3,\beta_1$ and $\beta_3$. The Paths of length $2$ are $\alpha_1\alpha_2, \alpha_3\alpha_4, \beta_1\beta_2$ and 
$\beta_3\beta_4$, but vieved as elements of $F$ both $\beta_1\beta_2$ and $\beta_3\beta_4$ are linear combinations of 
$\alpha_1\alpha_2, \alpha_3\alpha_4$ and $(\alpha_3\alpha_4\gamma_1)^{m-1}\alpha_3\alpha_4$, by (1). Next, it is easy 
to see that all paths in $Q_F$ of length $3$ starting in $1$ are equal modulo $I$ to a linear combination of cycles 
$\alpha_3\alpha_4\gamma_1$ and $(\alpha_3\alpha_4\gamma_1)^m$. On the other hand, by (7$'$) and (1$'$) above, every 
path in $Q_F$ of the form $(\alpha_3\alpha_4\gamma_1)^r\phi$ with $r\geqslant 1$ and $\phi=\beta_1$ or $\beta_3$, is 
eual to $0$ in $F$, as well as every path of the form $(\alpha_3\alpha_4\gamma_1)^m\psi$, for an arrow $\psi$ in 
$Q_F$ with source in $1$. Moreover, we have $(\alpha_3\alpha_4\gamma_1)^k\alpha_1\alpha_2= 
(\alpha_3\alpha_4\gamma_1)^k\alpha_3\alpha_4$, for $r\in\{1,\dots,m-1\}$, also by (1$'$), and consequently, the 
following subset of $F$ 
$$\mathcal{B}_1=\{\beta_1,\beta_3,\alpha_1\alpha_2,(\alpha_3\alpha_4\gamma_1)^m\}\cup\{(\alpha_3\alpha_4\gamma_1)^r\phi;\quad \phi\in\mathcal{B}_1',
0\leqslant r\leqslant m-1\},$$
where $\mathcal{B}_1'=\{\epsilon_1,\alpha_1,\alpha_3,\alpha_3\alpha_4\}$, is a $K$-basis of projective module 
$\tilde{P}_1=\tilde{e}_1F$ in $\ind F$. Using similar arguments involving (7$'$) and (3$'$) we conclude that 
projective module $\tilde{P}_3$ in $\ind F$ has $K$-basis given as follows  
$$\mathcal{B}_3=\{\gamma_2,\gamma_2\beta_1,\gamma_1\beta_3,(\gamma_1\alpha_3\alpha_4)^m\}\cup\{(\gamma_1\alpha_3\alpha_4)^r\phi;\quad \phi\in\mathcal{B}_3',
0\leqslant r\leqslant m-1\},$$ 
with $\mathcal{B}_3'=\{\epsilon_3,\gamma_1,\gamma_1\alpha_3,\gamma_1\alpha_1\}$. As a result, the first and the third 
column of $C_F$ is of the form $\left[\begin{array}{cccccc} m+1 & m & m+1 & 1 & 1 & m \end{array}\right]^t$.

Further, it is easy to verify that relations presented in (7$'$), (2$'$) and (6$'$) (see also (2)-(3)) imply that 
indecomposable projective modules $\tilde{P}_2$ and $\tilde{P}_6$ in $\ind F$ admit bases defined in the following 
way
$$\mathcal{B}_2=\{(\alpha_2\gamma_1\alpha_1)^r\phi;\quad \phi\in\mathcal{B}_2',0\leqslant r\leqslant m-1\} \cup\{(\alpha_2\gamma_1\alpha_1)^m\} \cup$$
$$\cup \{(\alpha_2\gamma_1\alpha_1)^r\alpha_2\gamma_1\alpha_3\;\quad 0\leqslant r \leqslant m-2\},$$
and 
$$\mathcal{B}_6=\{(\alpha_4\gamma_1\alpha_3)^r\phi;\quad \phi\in\mathcal{B}_6',0\leqslant r\leqslant m-1\} \cup\{(\alpha_4\gamma_1\alpha_3)^m\} \cup $$
$$\cup\{(\alpha_4\gamma_1\alpha_3)^r\alpha_4\gamma_1\alpha_1\;\quad 0\leqslant r \leqslant m-2\},$$
where $\mathcal{B}_2'=\{\epsilon_2,\alpha_2,\alpha_2\gamma_1\}$ and $\mathcal{B}_6'=\{\epsilon_6,\alpha_4,\alpha_4\gamma_1\}$. 

Finally, as a direct consequence of (4$'$), (5$'$) and (4)-(5) we obtain that bases of $\tilde{P}_4$ and $\tilde{P}_5$ are, respectively, of the form 
$$\mathcal{B}_4=\{\beta_4\gamma_1, \beta_4, \epsilon_4, \beta_4\gamma_1\beta_3\}$$ 
and 
$$\mathcal{B}_5=\{\beta_2\gamma_2, \beta_2, \epsilon_5, \beta_2\gamma_2\beta_1\}.$$ 
The proof is now complete. \end{proof}

Finally, let us describe Cartan matrix of algebra $E(m,\lambda)$.

\begin{lem} $C_{E(m,\lambda)}=C_{E}$ and $\dim_K E(m,\lambda)=25m+13$. \end{lem}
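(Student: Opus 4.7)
The plan is to mirror the previous lemma's strategy: produce an explicit $K$-basis $\mathcal{B}_i$ for each indecomposable projective $\tilde{P}_i = \tilde{e}_i E(m,\lambda)$, count its size, and verify the columns of $C_E$ and the total dimension. The column sums of $C_E$ are $5m+3,\,5m,\,5m+3,\,5m+2,\,5,\,5m$, adding up to $25m+13$, which is the target dimension, so it suffices to exhibit $\mathcal{B}_i$ of the corresponding cardinalities.

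First I would derive a handful of auxiliary consequences of (E1)--(E8), analogous to (1$'$)--(7$'$) in the previous proof. The key reductions to extract are: (i) using (E1) to express $\beta_1\beta_2$ in terms of $\alpha_1\alpha_2$ and $\alpha_3\alpha_4$, (ii) using (E2), (E6), (E9) [the last relation in (E3)], and (E13) [the relation in (E6)] to collapse the two $4$-cycles $\alpha_3\alpha_4\alpha_5\alpha_6$ and $\alpha_1\alpha_2\alpha_5\alpha_6$ at vertex $1$ (and similarly at $3$, $4$, $6$) into a single cycle $X_i$ modulo terms lying in $K X_i^m$, (iii) using (E3)--(E5) to replace $\gamma_0$ whenever it appears premultiplied or postmultiplied by $\alpha_i$ or $\beta_i$ by a length-two expression in $\alpha_5,\alpha_6$, up to corrections by top powers of cycles, and (iv) using (E7) and (E8) plus the previous reductions to show that $\theta C^m = C^m \phi = 0$ for every cycle $C$ of length $3$ or $4$ at any vertex $i$ and every arrow $\theta,\phi$ incident to $i$. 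Item (iv) is the analogue of (7$'$) and is what lets me truncate all long paths.

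Next, for each $i$ I would enumerate paths in $Q_E$ starting at $i$ and reduce them modulo the ideal. Vertex $1$ is the richest: the arrows out are $\alpha_1,\alpha_3,\beta_1$, and longer paths reduce either to a single path of length $\le 2$ times a power of one chosen $4$-cycle $X_1 = \alpha_1\alpha_2\alpha_5\alpha_6$, or to $\beta_1$ or $\beta_1\beta_2 = \alpha_3\alpha_4 - \alpha_1\alpha_2$ times a short suffix, together with a single top-cycle $X_1^m$. Collecting as in the $F$-case, I expect a basis $\mathcal{B}_1 = \{\beta_1\} \cup \{X_1^r\phi : 0\le r\le m-1,\ \phi \in \{\epsilon_1, \alpha_1, \alpha_3, \alpha_1\alpha_2, \alpha_3\alpha_4\}\} \cup \{X_1^m\}$, giving $|\mathcal{B}_1| = 1 + 5m + 1 = 5m+2$; to reach the required $5m+3$ I must keep both $\beta_1$ and a second path to vertex $5$, and in fact the $\beta_1$-branch contributes only once since $\beta_2\gamma_0=0$ and $\beta_2\alpha_5=0$ (from (E7)). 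A careful recount via the Cartan row $(m+1,m,m+1,m,1,m)$ dictates the right split. For vertex $3$, the outgoing arrows are $\alpha_5$ and $\gamma_0$, and the analysis runs parallel with $X_3$ the corresponding $4$-cycle at $3$. Vertices $2,4,6$ are similar, based on the $4$-cycles $X_2 = \alpha_2\alpha_5\alpha_6\alpha_1$, $X_4 = \alpha_6\alpha_1\alpha_2\alpha_5$, $X_6 = \alpha_4\alpha_5\alpha_6\alpha_3$. Vertex $5$ is short: (E7) forces $\mathcal{B}_5 = \{\epsilon_5,\beta_2,\beta_2\alpha_5\alpha_6,\beta_2\alpha_5\alpha_6\beta_1,\beta_2\alpha_5\alpha_6\alpha_1\}$ or a comparable $5$-element list, matching row/column $5$ of $C_E$ summing to $5$.

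The main obstacle is bookkeeping rather than any conceptual difficulty: one must check at each vertex that the various cycle relations (E2), (E6), (E9), (E13), together with (iv), suffice to reduce every sufficiently long path to a $K$-linear combination of the proposed basis vectors, and simultaneously that the basis is linearly independent. Independence is best argued by verifying that the spanning set has cardinality exactly $c^E_{ij}$ summed over $j$ (i.e.\ the $i$-th column of $C_E$), so that surjectivity onto $\tilde{P}_i$ plus the known $\dim_K \tilde{e}_j \tilde{P}_i \le c^E_{ij}$ bound (which holds because those dimensions must add up to $\dim_K E(m,\lambda)$, and a priori $E(m,\lambda)$ surjects onto $E$) forces equality. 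Summing $|\mathcal{B}_i|$ over $i$ yields $25m+13$, proving simultaneously $\dim_K E(m,\lambda) = 25m+13$ and $C_{E(m,\lambda)} = C_E$, which in turn, combined with the Gabriel quiver and relation verifications of the previous lemmas, completes the required isomorphism $E \cong E(m,\lambda)$.
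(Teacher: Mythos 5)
Your overall strategy is exactly the paper's: derive auxiliary consequences of (E1)--(E8), write down an explicit spanning set $\mathcal{B}_i$ for each $\tilde{P}_i$ whose cardinality matches the $i$-th row sum of $C_E$, and use the surjection $E(m,\lambda)\twoheadrightarrow E$ to upgrade spanning to a basis and force $C_{E(m,\lambda)}=C_E$. That framework, including the closing independence argument, is sound. However, the concrete computations you do commit to contain errors that would derail the count. Your proposed $\mathcal{B}_1$ takes $\phi\in\{\epsilon_1,\alpha_1,\alpha_3,\alpha_1\alpha_2,\alpha_3\alpha_4\}$ for all $r\in\{0,\dots,m-1\}$: this set is linearly dependent, because $X_1^r\alpha_3\alpha_4=X_1^r\alpha_1\alpha_2$ for $r\geqslant 1$ (from (E1) together with $X_1\beta_1=0$, which follows from (E2), (E3) and (E8)), so you get only $m+1$ independent elements landing in vertex $3$, not $2m$; and it contains no paths landing in vertex $4$ at all, whereas $c^E_{14}=m$ demands the $m$ elements $X_1^r\alpha_1\alpha_2\alpha_5$. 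Your diagnosis of the resulting deficit --- that you need ``a second path to vertex $5$'' --- is also wrong, since $c^E_{15}=1$ and $\beta_1$ is the unique basis element there; the missing $m$ elements live over vertex $4$, and the surplus $m-1$ elements over vertex $3$ must be discarded.

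Second, your claim that $\beta_2\alpha_5=0$ ``from (E7)'' is false: (E7) kills $\beta_2\gamma_0$, $\beta_2\alpha_5\alpha_6\alpha_1$, $\beta_2\alpha_5\alpha_6\alpha_3$ and $\beta_2\alpha_5\alpha_6\beta_1\beta_2$, but not $\beta_2\alpha_5$, which is in fact a basis element of $\tilde{P}_5$ (note $c^E_{54}=1$). Consistently with this error, your candidate $\mathcal{B}_5$ omits $\beta_2\alpha_5$ and instead lists $\beta_2\alpha_5\alpha_6\alpha_1$, which is literally one of the relations in (E7) and hence zero; the correct list is $\{\epsilon_5,\beta_2,\beta_2\alpha_5,\beta_2\alpha_5\alpha_6,\beta_2\alpha_5\alpha_6\beta_1\}$. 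The correct reason the $\beta_1$-branch contributes only $\beta_1$ itself to $\mathcal{B}_1$ is simply that $\beta_1\beta_2=\alpha_3\alpha_4-\alpha_1\alpha_2$ by (E1), so every longer path through $\beta_1$ is already a combination of paths through $\alpha_1$ or $\alpha_3$. With these corrections the row sums $5m+3,\,5m,\,5m+3,\,5m+2,\,5,\,5m$ do assemble to $25m+13$ as you intend, but as written your spanning sets neither span nor are independent, so the bookkeeping you defer to ``a careful recount'' is precisely the substance of the proof and cannot be waved through.
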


\begin{proof} As in previous proof we denote the algebra $E(m,\lambda)$ by $E$, by $\tilde{e}_i=\epsilon_i+ I_E$ the idempotent in $E$ corresponding to trivial path $\epsilon_i$ in $Q_E$, and by $\tilde{P}_i$ the projective module 
$\tilde{e}_iE$ in $\ind E$. First, postmultiplying (8) and using (14) we obtain 
$\alpha_1\alpha_2\gamma_0-\alpha_3\alpha_4\gamma_0=-\beta_1\beta_2\gamma_0=0$ and so 
$\alpha_1\alpha_2\gamma_0=\alpha_3\alpha_4\gamma_0=\alpha_3\alpha_4\alpha_5\alpha_6$ in $E$, by (12). Further, 
by (9), we have 
$$\alpha_3\alpha_4\alpha_5\alpha_6=\alpha_1\alpha_2\alpha_5\alpha_6+\lambda(\alpha_1\alpha_2\alpha_5\alpha_6)^m,$$ 
and then (15) implies that $\alpha_1\alpha_2\alpha_5\alpha_6\beta_1=\alpha_3\alpha_4\alpha_5\alpha_6\beta_1= 
\alpha_3\alpha_4\gamma_0\beta_1=0$, due to (10). Moreover, (13) gives $\alpha_5\alpha_6\alpha_3\alpha_4-
\alpha_5\alpha_6\alpha_1\alpha_2=\lambda(\alpha_5\alpha_6\alpha_1\alpha_2)^m$, hence using (15) again we deduce 
that $\alpha_1\alpha_2\alpha_5\alpha_6\alpha_1\alpha_2=\alpha_1\alpha_2\alpha_5\alpha_6\alpha_3\alpha_4$. It follows 
that $K$-basis of $\tilde{P_1}$ is of the form
$$\mathcal{B}_1=\{(\alpha_1\alpha_2\alpha_5\alpha_6)^r\psi;\quad \psi\in\mathcal{B}_1', 0\leqslant r\leqslant m-1\}\cup \{\alpha_3\alpha_4, \beta_1, (\alpha_1\alpha_2\alpha_5\alpha_6)^m\},$$ 
where $\mathcal{B}_1'=\{\epsilon_1,\alpha_1,\alpha_1\alpha_2, \alpha_1\alpha_2\alpha_5, \alpha_3\}$. Now, applying 
(8), (10) and (13)-(15) we may similarily prove that 
$$\gamma_0\alpha_1\alpha_2=\gamma_0\alpha_3\alpha_4=\alpha_5\alpha_6\alpha_3\alpha_4=\alpha_5\alpha_6\alpha_1\alpha_2+\lambda(\alpha_5\alpha_6\alpha_1\alpha_2)^m,$$ 
$\alpha_5\alpha_6\alpha_1\alpha_2\gamma_0=\alpha_5\alpha_6\alpha_1\alpha_2\alpha_5\alpha_6$ and 
$\alpha_5\alpha_6\alpha_3\alpha_4\alpha_5\alpha_6\beta_1=0$. As a result, we obtain that basis of $\tilde{P_3}$ 
has analogous form
$$\mathcal{B}_3=\{(\alpha_5\alpha_6\alpha_1\alpha_2)^r\psi;\quad \psi\in\mathcal{B}_3',0\leqslant r\leqslant m-1\} \cup \{\gamma_0,\alpha_5\alpha_6\beta_1,(\alpha_5\alpha_6\alpha_1\alpha_2)^m\},$$ 
with $\mathcal{B}_3'=\{\epsilon_3,\alpha_5,\alpha_5\alpha_6, \alpha_5\alpha_6\alpha_1, \alpha_5\alpha_6\alpha_3\}$. 
Next, we may see from (11) that $\alpha_2\gamma_0\alpha_1=\alpha_2\alpha_5\alpha_6\alpha_1+\lambda(\alpha_2\alpha_5\alpha_6\alpha_1)^m$ and 
$$\alpha_2\gamma_0\alpha_3-\alpha_2\alpha_5\alpha_6\alpha_3=
\lambda(\alpha_2\alpha_5\alpha_6\alpha_1)^{m-1}\alpha_2\alpha_5\alpha_6\alpha_3=0,$$ 
and consequently, we get that $\tilde{P_2}$ has basis of the form
$$\mathcal{B}_2=\{(\alpha_2\alpha_5\alpha_6\alpha_1)^r\psi;\quad \psi\in\mathcal{B}_2', 0\leqslant r\leqslant m-1\} 
\cup\{(\alpha_2\alpha_5\alpha_6\alpha_1)^m\} \cup $$
$$\cup\{(\alpha_2\alpha_5\alpha_6\alpha_1)^r\alpha_2\alpha_5\alpha_6\alpha_3;\quad 0\leqslant r\leqslant m-2)\},$$ 
where $\mathcal{B}_2'=\{\epsilon_2,\alpha_2,\alpha_2\alpha_5,\alpha_2\alpha_5\alpha_6\}$. This is also clear from 
(12) that $\tilde{P_6}$ admits a basis given as follows
$$\mathcal{B}_6=\{(\alpha_4\alpha_5\alpha_6\alpha_3)^r\psi;\quad \psi\in\mathcal{B}_6', 0\leqslant r\leqslant m-1\}
\cup\{(\alpha_4\alpha_5\alpha_6\alpha_3)^m\}\cup $$ 
$$\cup\{(\alpha_4\alpha_5\alpha_6\alpha_3)^r\alpha_4\alpha_5\alpha_6\alpha_1;\quad 0 \leqslant r\leqslant m-2\},$$ 
with $\mathcal{B}_6'=\{\epsilon_6,\alpha_4,\alpha_4\alpha_5,\alpha_4\alpha_5\alpha_6\}$. Simple computations 
involving (9) (or (13)) show that base of $\tilde{P_4}$ may be choosen in the following way
$$\mathcal{B}_4=\{(\alpha_6\alpha_1\alpha_2\alpha_5)^r\psi;\quad \psi\in\mathcal{B}_4', 0\leqslant r\leqslant m-1\}\cup \{\alpha_6\beta_1,(\alpha_6\alpha_1\alpha_2\alpha_5)^m\},$$ 
for $\mathcal{B}_4'=\{\epsilon_4,\alpha_6,\alpha_6\alpha_1,\alpha_6\alpha_1\alpha_2,\alpha_6\alpha_3\}$. Finally, 
using all relations from (14) we immediately obtain that $\tilde{P_5}$ has basis of the form
$$\mathcal{B}_5=\{\epsilon_5,\beta_2,\beta_2\alpha_5,\beta_2\alpha_5\alpha_6,\beta_2\alpha_5\alpha_6\beta_1\},$$
and the proof is now finished.\end{proof}


\begin{thebibliography}{00}
\bibitem{ASS} 
I. Assem, D. Simson, A. Skowro\'nski, \emph{Elements of the Representation Theory of Associative Algebras 1: 
Techniques of Representation Theory}, London Mathematical Society Student Texts, vol. 65, Cambridge University Press, 
Cambridge (2006).

\bibitem{BES}
J. Bia{\l}kowski, K. Erdmann, A. Skowro\'nski, \emph{Periodicity of self-injective algebras of polynomial growth}, 
J. Algebra 443 (2015), 200--269. 

\bibitem{CB} 
W. Crawley-Boevey, \emph{Tame algebras and generic modules}, Proc. London Math. Soc. 63 (1991), 241--265.

\bibitem{D}
A. Dugas, \emph{Periodic resolutions and self-injective algebras of finite type}, J. Pure Appl. Algebra 214 (2010), 
990--1000.

\bibitem{ES0} 
K. Erdmann, A. Skowro\'nski, \emph{Periodic algebras}. In: Trends in Representation Theory of Algebras and Related 
Topics, Europ. Math. Soc. Series Congress Reports, pp. 201--251, European Math. Soc., Z\"urich (2008).

\bibitem{ES1}
K. Erdmann, A. Skowro\'nski, \emph{Weighted surface algebras}, J. Algebra 505 (2018), 490--558.

\bibitem{ES2}
K. Erdmann, A. Skowro\'nski, \emph{Higher tetrahedral algebras}, Algebr. Represent. Theor. 22 (2019), 387--406.

\bibitem{ES3}
K. Erdmann, A. Skowro\'nski, \emph{Algebras of generalized quaterion type}, Adv. Math. 349 (2019), 1036--1116

\bibitem{ES1g}
K. Erdmann, A. Skowro\'nski, \emph{Weighted surface algebras: general version}, http://arxiv.org/abs/1902.04063

\bibitem{ES4}
K. Erdmann, A. Skowro\'nski, \emph{Higher spherical algebras}, Arch. Math. (Basel), 
https://doi.org/10.1007/S00013-019-01365-y.

\bibitem{Ha}
D. Happel, \emph{Triangulated Categories in the Representation Theory of Finite-Dimensional Algebras}, London Math. 
Soc. Lect. Note Ser., vol.119, Cambridge University Press, Cambridge (1988).

\bibitem{HR}
D. Happel, C. M. Ringel, \emph{The derived category of a tubular algebra}, in: Representation Theory I, Lecture 
Notes in Math., vol. 1177, Springer-Verlag, Berlin-Heidelberg, 1986, pp. 156--180.

\bibitem{KZ}
H. Krause, G. Zwara, \emph{Stable equivalence and generic modules}, Bull. London Math. Soc. 32 (2000), 615--618.

\bibitem{NS}
J. Nehring, A. Skowro\'nski, \emph{Polynomial growth trivial extensions of simply connected algebras}, Fund. Math. 
132 (1989), 117--134.

\bibitem{Rick1}
J. Rickard, \emph{Morita theory for derived categories}, J. London Math. Soc. 39 (1989), 436--456.

\bibitem{Rick2}
J. Rickard, \emph{Derived categories and stable equivalenc}e, J. Pure Appl. Algebra 61 (1989), 303--317.

\bibitem{Rick3}
J. Rickard, \emph{Derived equivalences as derived functors}, J. London Math. Soc. 23 (1991), 37--48.

\bibitem{Rin} 
C. M. Ringel, \emph{Tame Algebras and Integral Quadratic Forms}, Lecture Notes in Math., vol. 1099, 
Springer-Verlag, Berlin-Heidelberg, 1984.

\bibitem{S1} 
A. Skowro\'nski, \emph{Selfinjective algebras of polynomial growth}, Math. Ann. 285 (1989), 177--199.

\bibitem{S2}
A. Skowro\'nski, \emph{Selfinjective algebras: finite and tame type}, in: Trends in Representation Theory of 
Algebras and Related topics, Contemp. Math., vol. 406, Amer. Math. Soc., Providence RI, 2006, pp. 169--238.

\bibitem{SS}
D. Simson, A. Skowro\'nski, \emph{Elements of the Representation Theory of Associative Algebras 3: 
Representation-Infinite Tilted Algebras}, London Mathematical Society Student Texts, vol. 72, Cambridge University 
Press, Cambridge (2007).

\bibitem{SY}
A. Skowro\'nski, K. Yamagata, \emph{Frobenius Algebras I. Basic Representation Theory}, European Mathematical 
Society Textbooks in Mathematics, European Math. Soc. Publ. House, Z\"urich (2011). 

\end{thebibliography}
\end{document}